\documentclass[11pt]{article}
\setlength{\textwidth}{16cm}
\setlength{\textheight}{20cm}
\setlength{\oddsidemargin}{+0.2cm}
\setlength{\topmargin}{0cm}
\usepackage{amsmath}
\usepackage{amssymb}
\usepackage{amsthm}
\usepackage{cases}
\usepackage{url}

\newtheorem{theorem}{Theorem}[section]

\newtheorem{corollary}[theorem]{Corollary}
\newtheorem{proposition}[theorem]{Proposition}
\newtheorem{remark}[theorem]{Remark}

\newcommand\sa{\smallskipamount}
\newcommand\sPP{\\[\sa]\indent}
\newcommand\al\alpha
\newcommand\be\beta
\newcommand\de\delta
\newcommand\tha\theta
\newcommand\la\lambda
\newcommand\La\Lambda
\newcommand\ga{\gamma}
\newcommand\Ga{\Gamma}
\begin{document}
\title{On the differential equation for the Sobolev-Laguerre polynomials}
\author{Clemens Markett}
\date{}
\maketitle

\numberwithin{equation}{section}
\numberwithin{theorem}{section}
\begin{abstract}

The Sobolev-Laguerre polynomials form an orthogonal polynomial system with respect to a Sobolev-type inner product associated with the Laguerre measure on the positive half-axis and two point masses $M,N > 0$ at the origin involving  functions and derivatives. These polynomials have attracted much interest over the  last two decades, since they became known to satisfy, for any value of the Laguerre parameter  $\al\in\mathbb{N}_{0}$, a spectral differential equation of finite order $4\al +10$. In this paper we establish a new explicit representation of the corresponding differential operator which consists of a number of elementary components depending on $\al ,M,N$. Their interaction reveals a rich structure both being useful for applications and as a model for further investigations in the field. In particular, the Sobolev-Laguerre differential operator is shown to be symmetric with respect to the inner product.\\ 
\\
Key words: Sobolev orthogonal polynomials, higher-order differential equations, Sobolev-Laguerre differential operator, Sobolev-Laguerre polynomials, Laguerre equation, symmetric differential operator.\\
\\
2010 Mathematics Subject Classification: 33C47, 34B30, 34L10
\end{abstract}
\section{Introduction}
\label{intro}
For $\al >-1,M \ge 0,N \ge 0$, R. Koekoek and H. G. Meyer \cite{KM1}, \cite{KM2} introduced the generalized Laguerre polynomials $\{L_{n}^{\al,M,N}(x)\} _{n=0}^{\infty}\,$  , which are orthogonal on $0 \le x < \infty$ with respect to the inner product of Sobolev-type
\begin{equation}
(f,g)_{w(\al,M,N)}= \frac{1}{\Ga (\al +1)}\int_{0}^{\infty}f(x)g(x)e^{-x}x^{\al}dx+M\,f(0)g(0)+N\,f'(0)g'(0).
 \label{eq1.1}
\end{equation}  
In terms of the classical Laguerre polynomials 
\begin{equation*}
L_{n}^{\al}(x)=\frac{(\al +1)_n}{n!}\;{}_1F_1(-n;\al+1;x),\;n\in\mathbb{N}_{0},
\end{equation*}  
the Sobolev-Laguerre polynomials can be written as, cf. \cite[(10.1-2)]{KM2},
\begin{equation}
L_{n}^{\al,M,N}(x)=L_n^{\al}(x)+M\,T_n^{\al}(x)+N\,U_n^{\al}(x)+MN\,V_n^{\al}(x),\; 0 \le x < \infty,
 \label{eq1.2}
\end{equation} 	
where $T_0^{\al}(x)=U_0^{\al}(x)=U_1^{\al}(x)=V_0^{\al}(x)=V_1^{\al}(x)=0$ and, for other values of $n\in\mathbb{N}$, 
\begin{equation}
\begin{aligned}
&T_n^{\al}(x)=-\frac{(\al +2)_{n-1}}{n!}xL_{n-1}^{\al+2}(x),\\ 
&U_n^{\al}(x)=\frac{(\al +3)_{n-2}}{(\al +1)(\al +3)(n-2)!}\big\lbrack U_{n,1}^{\al}(x)+U_{n,2}^{\al}(x)+U_{n,3}^{\al}(x)\big\rbrack \; with\\ 
&U_{n,1}^{\al}(x)=\frac{x^2}{n-1}L_{n-2}^{\al+4}(x),
U_{n,2}^{\al}(x)=-(\al +2)xL_{n-1}^{\al+2}(x),
U_{n,3}^{\al}(x)=-(n+\al +1)L_n^{\al}(x),\\
&V_n^{\al}(x)=\frac{1}{\al +1}\frac{(\al +3)_{n-2}}{(n-1)!}\frac{(\al +4)_{n-2}}{n!}x^2L_{n-2}^{\al+4}(x).
 \label{eq1.3}
\end{aligned}
\end{equation}
As usual, $(a)_0=1$  and $(a)_n=a(a+1)\cdots (a+n-1), a\in \mathbb{R},\,n\in\mathbb{N}$, is the shifted factorial. Another representation of $U_n^{\al}(x)$ is given in (\ref{eq2.26}) below. For $M>0$  and $N=0$, the polynomials (\ref{eq1.2}) reduce to the Laguerre-type polynomials due to T.H. Koornwinder \cite{Ko}. 

In 1998, J. and R. Koekoek and H. Bavinck \cite{KKB} discovered that for $M,N>0$ and each $\al\in\mathbb{N}_0$, the polynomials $y_n(x)=L_{n}^{\al,M,N}(x), n\in\mathbb{N}_0$, satisfy a unique spectral differential equation of finite order  $4\al +10$. Following Bavinck \cite[(2)]{Ba1}, it is convenient to write this equation in an operational form which corresponds to the representation (\ref{eq1.2}) of the eigenfunctions,
    \begin{equation}
 \bigg\lbrace \big\lbrack \mathcal{L}_x^{\al}+\la_n^\al\big\rbrack + 
  M\big\lbrack \mathcal{A}_x^{\al}+\la_n^{\al ,A}\big\rbrack +  
  N\big\lbrack \mathcal{B}_x^{\al}+\la_n^{\al ,B}\big\rbrack +  
  MN \big\lbrack \mathcal{C}_x^{\al}+\la_n^{\al ,C}\big\rbrack\bigg\rbrace y_n(x)=0.
            \label{eq1.4}
        \end{equation}
    
The present paper deals with the four differential operators $\mathcal{L}_x^{\al},\,\mathcal{A}_x^{\al},\,\mathcal{B}_x^{\al},\,\mathcal{C}_x^{\al}$, each being independent of $n$. Our major purpose is to establish new efficient representations of the latter two associated with the parameters $M,N$. The four components of the eigenvalue parameter in (\ref{eq1.4}) are polynomials in $n$ given by, cf. \cite[Sec. 2]{KKB},
  \begin{equation}
  \begin{aligned}
  &\la_n^{\al}=n,\;
  \la_n^{\al,A}=\frac{(n)_{\al +2}}{(\al +2)!},\;
 \la_n^{\al ,B}=\frac{\al +2}{\al +1}\frac{(n-2)_{\al +4}}{(\al +4)!}+\frac{1}{\al +1}\frac{(n-1)_{\al +3}}{(\al +3)!},\\
 & \la_n^{\al ,C}=\frac{1}{\al +1}\sum_{j=0}^{\min(n-2,\al +2)}
 \frac{(\al +3-j)_{2j}}{(j!(j+1)!}\frac{(n-1-j)_{j+\al +3}}{(j+\al +3)!}.
        \label{eq1.5}
  \end{aligned}
    \end{equation}
    
For $M=N=0$, the first component of (\ref{eq1.4}) comprises the classical Laguerre equation $\big\lbrack \mathcal{L}_x^{\al}+n\big\rbrack L_n^{\al}(x)=0$, where
\begin{equation}
 \mathcal{L}_x^{\al}y(x)=\big\lbrack xD_x^2+(\al +1-x)D_x\big\rbrack y(x)=e^x x^{-\al}D_x\big\lbrack e^{-x}x^{\al +1}D_x y(x)\big\rbrack. 
 \label{eq1.6}
 \end{equation}
Here and in the following, $D_x^{i}\equiv (D_x)^{i}, i \in \mathbb{N}$, denotes an  $i$-fold differentiation with respect to $x$. The second term of (\ref{eq1.4}) is associated with the Laguerre-type equation determined by J. and R. Koekoek \cite[(15)]{KK1}. This differential equation is of order $2\al+4$ with
 \begin{equation}
 \begin{aligned}
 \mathcal{A}_x^{\al}y(x)=&\sum_{i=1}^{2\al+4}a_i(\al,x)D_x^i y(x),\\
 a_i(\al,x)=&\frac{1}{(\al +2)!}\sum_{j=\max(1,\,i-\al-2)}^{\min(i,\al+2)}
 (-1)^{i+j+1}\binom{\al+1}{j-1}\binom{\al+2}{i-j}(i+1)_{\al+2-j}\,x^j.
\label{eq1.7}
\end{aligned}
  \end{equation}
Just recently, we found another representation of this differential expression  more reminiscent of the second identity of definition (\ref{eq1.6}), see \cite{Ma2}. Indicating the order of the expression as another index, we proved that $ \mathcal{A}_x^{\al}y(x)$ coincides with 
 \begin{equation}
  \breve{\mathcal{L}}_{2\al+4,x}^{\al}y(x)=\frac{(-1)^{\al+1} }{(\al +2)!}e^x x\,D_x^{\al+2}\big\lbrace e^{-x}D_x^{\al+2}[x^{\al+1}y(x)]\big\rbrace .
  \label{eq1.8}
  \end{equation}
This result was obtained by taking a confluent limit of a similar representation of the higher-order Jacobi-type differential equation established in \cite{Ma1}. For an extension of this paper to the generalized Jacobi equation with four parameters see \cite{Ma3}. A combination of (\ref{eq1.6}) and (\ref{eq1.8}) could then be used to show that the Laguerre-type differential operator is symmetric with respect to the inner product (\ref{eq1.1}) for $N=0$.
 
In the Sobolev cases $M \ge 0,N>0$, however, the latter two components of equation (\ref{eq1.4}) are by far more complicated. In fact, it was shown by J. and R. Koekoek and H. Bavinck \cite[Cor. 2, Thm. 3]{KKB} that the coefficient functions $\{\be_{i}(\al,x)\}_{i=1}^{2\al+8}$ and $\{\ga_{i}(\al,x)\}_{i=1}^{4\al+10}$ of the corresponding differential expressions 
 \begin{equation}
  \mathcal{B}_x^{\al}y(x)=\sum_{i=1}^{2\al+8}\be_i(\al,x)D_x^i y(x),\quad  \mathcal{C}_x^{\al}y(x)=\sum_{i=1}^{4\al+10}\ga_i(\al,x)D_x^i y(x), 
 \label{eq1.9}
  \end{equation}
are polynomials of degree at most $i$, each given as a sum of up to five terms involving certain hypergeometric sums. Notably, each of the four differential operators in (\ref{eq1.4}) is of an order being twice the degree of the respective eigenvalue component in (\ref{eq1.5}). In particular, the highest coefficient functions turned out to be the monomials
 \begin{equation}
 \be_{2\al+8}(\al,x)=\frac{(\al+2)(-1)^{\al+1}}{(\al +1)(\al+4)!}x^{\al+4},\;
 \ga_{4\al+10}(\al,x)=\frac{x^{2\al+5}}{(\al+1)(2\al +5)(\al+2)!(\al+3)!}.
\label{eq1.10}
  \end{equation}
Moreover, it was proved for any $\al \in \mathbb{N}_0$, that the coefficient functions share the remarkable property 
 \begin{equation}
  \sum_{i=1}^{2\al+4}a_i(\al,x)= \sum_{i=1}^{2\al+8}\be_i(\al,x)= \sum_{i=1}^{4\al+10}\ga_i(\al,x)=0.
 \label{eq1.11}
  \end{equation}

So it is natural to raise the question whether there are useful representations of the two components $\mathcal{B}_x^\al$ and $\mathcal{C}_x^\al$  of the Sobolev-Laguerre equation as well, preferably of a form similar to the component (\ref{eq1.8}). In Section 2, we answer this question in the affirmative, see Theorem 2.1. According to their respective orders, we denote the new differential expressions by $\widetilde{\mathcal{L}}_{2\al+8,x}^{\al}y(x)$ and $\widehat{\mathcal{L}}_{4\al+10,x}^{\al}y(x)$, respectively. Surprisingly, $\widetilde{\mathcal{L}}_{2\al+8,x}^{\al}y(x)$ turns out to be a certain linear combination of three distinct parts, while the differential expression $\widehat{\mathcal{L}}_{4\al+10,x}^{\al}y(x)$  consists of $\al+3$ terms of a similar structure. As a first consequence we note that the new representations immediately satisfy the properties (\ref{eq1.10}) and (\ref{eq1.11}), as well. 
    
The proof of Theorem 2.1 crucially depends on the fact that each component of equation (\ref{eq1.4}) exhibits an eigenvalue equation by itself, whose eigenfunctions are the corresponding components of the Sobolev-Laguerre polynomial (\ref{eq1.2}). While the first of these equations is classical, see (\ref{eq1.6}), it was proved in \cite[Cor 2.3]{Ma1} that  
 \begin{equation}
  \big\lbrack \breve{\mathcal{L}}_{2\al+4,x}^\al +\la_n^{\al ,A}\big\rbrack T_n^\al (x)=0,\;\al \in \mathbb{N}_0,\;n \ge 1.  
  \label{eq1.12}
    \end{equation}
Now, in Proposition \ref{prop2.3}, we obtain, for each $\al \in \mathbb{N}_0$ and all $n \ge 2$,
 \begin{equation}
  \big\lbrack \widetilde{\mathcal{L}}_{2\al+8,x}^{\al}+\la_n^{\al ,B}\big\rbrack U_n^\al (x)=0,\;\big\lbrack \widehat{\mathcal{L}}_{4\al+10,x}^{\al}+\la_n^{\al ,C}\big\rbrack V_n^\al (x)=0.
 \label{eq1.13}
    \end{equation}
Finally, Section 3 is devoted to showing that the Sobolev-Laguerre differential operator 
\begin{equation}
  \mathcal{\mathcal{L}}_x^{\al,M,N}= \mathcal{L}_x^\al+M\breve{\mathcal{L}}_{2\al+4,x}^\al+N \widetilde{\mathcal{L}}_{2\al+8,x}^\al+MN\widehat{\mathcal{L}}_{4\al+10,x}^\al 
  \label{eq1.14}
    \end{equation}
is symmetric with respect to the inner product (\ref{eq1.1}). Consequently, the corresponding eigenfunctions form an orthogonal polynomial system in the respective function space. This enables us to trace the orthogonality of the Sobolev-Laguerre polynomials back to their differential equation property. 

Over the past quarter century, there has been an enormous interest in Sobolev orthogonal polynomials and their various properties. An excellent overview over  new developments in this fascinating area of research as well as many historical comments have been given by F. Marcell\'{a}n and Y. Xu \cite{MX}, see, in particular, the chapters on "Sobolev type orthogonal polynomials" and "Differential equations". Another profound article by W. N. Everitt, K.H. Kwon, L. L. Littlejohn, and R. Wellman \cite{EKLW} surveys the known results on orthogonal polynomial solutions of linear ordinary differential equations until the turn of the century. In particular, I. H. Jung, K. H. Kwon, D. W. Lee, and L. L. Littlejohn \cite{JKLL} found necessary and sufficient conditions for such spectral differential equations. A promising approach to Sobolev-Laguerre polynomials and their differential equations was developed by A. Gr\"{u}nbaum, L. Haine, and E. Horozov \cite{GHH} by repeatedly applying a process of Darboux transformations. Very recently, P. Iliev \cite{IL} as well as A. J. Dur\'{a}n and M. D. de la Iglesia \cite{DI} extended these results to even more general settings via new constructive techniques. 

The results of the present paper and the method of proof  certainly give some deeper insight into the nature of higher-order differential equations with polynomial eigenfunctions and should be worthwhile for further studies. For instance, it is very likely that there are similar results in case of the differential equation for Sobolev-type Gegenbauer polynomials, cf. \cite{Ba2}, \cite{BK} and the literature cited there. 
 
  \section{New representation of the Sobolev-Laguerre equation}
     \label{sec:2}
 
 The main result of this paper is the following. 
 \begin{theorem}
 \label{thm2.1}
 For $\al \in \mathbb{N}_0,M \ge 0, N>0$, let $\mathcal{L}_x^{\al,M,N}$ denote the Sobolev-Laguerre differential operator (\ref{eq1.14}) with the (combined) eigenvalue defined via (\ref{eq1.5}) by
   \begin{equation}
   \la_n^{\al,M,N}=\la_n^\al+M\la_n^{\al ,A} +N\la_n^{\al ,B}+MN\la_n^{\al ,C},\;n \in \mathbb{N}_0.
         \label{eq2.1}
     \end{equation}
 Then the Sobolev-Laguerre polynomials $y_n(x)=L_n^{\al,M,N}(x),\,n \in \mathbb{N}_0,$  satisfy the spectral differential equation
   \begin{equation}
   \mathcal{L}_x^{\al,M,N}y_n(x)=-\la_n^{\al,M,N}y_n(x),\;n \in \mathbb{N}_0,
         \label{eq2.2}
     \end{equation}
where the differential expressions  $\mathcal{L}_x^{\al}y(x)$ and $\breve{\mathcal{L}}_{2\al+4,x}^{\al}y(x)$ are given in (\ref{eq1.6}) and (\ref{eq1.8}), respectively. Furthermore, 
    \begin{equation}
    \widetilde{\mathcal{L}}_{2\al+8,x}^{\al}y(x)=\frac{(-1)^{\al}}{(\al +1)(\al +4)!}\big\lbrack \mathcal{E}_x^\al +\mathcal{F}_x^\al +\mathcal{G}_x^\al \big\rbrack y(x)
          \label{eq2.3}
      \end{equation}
 with
 \begin{equation}
   \begin{aligned}
  \mathcal{E}_x^\al y(x)&=-(\al+2)\,e^x x\,D_x^{\al+4}\big\lbrace e^{-x} x^2 D_x^{\al+4}\big\lbrack x^{\al+1}y(x)\big\rbrack\big\rbrace,\\
   \mathcal{F}_x^\al y(x)&=(\al+4)\,e^x x\,D_x^{\al+3}\big\lbrace e^{-x} (x+2\al+4)\,D_x^{\al+3}\big\lbrack x^{\al+1}y(x)\big\rbrack\big\rbrace,\\
    \mathcal{G}_x^\al y(x)&=(\al+1)(\al+3)(\al+4)\,e^x D_x^{\al+2}\big\lbrace e^{-x} (x+2\al+4)\,D_x^{\al+2}\big\lbrack x^{\al} y(x)\big\rbrack\big\rbrace. 
   \end{aligned}
  \label{eq2.4}
  \end{equation}
 and
  \begin{equation}
    \begin{aligned}
   \widehat{\mathcal{L}}_{4\al+10,x}^{\al} y(x)&=\sum_{j=0}^{\al +2}\frac{(\al+3-j)_{2j}}{j!(j+1)!}\mathcal{H}_x^{\al,j}y(x),\\
   \mathcal{H}_x^{\al,j}y(x)&=\frac{(-1)^{\al+j}}{(\al +1)(\al+3+j)!}e^x x\,D_x^{\al+3+j}\big\lbrace e^{-x}x^j (x+q_j^\al)\,D_x^{\al+3+j}\big\lbrack x^{\al+1}y(x)\big\rbrack\big\rbrace,
  \end{aligned}
   \label{eq2.5}
   \end{equation}
 where
   \begin{equation}
    q_j^\al =(\al +2)^{-1}(\al +2-j)(\al +3+j).
    \label{eq2.6}
    \end{equation}
  \end{theorem}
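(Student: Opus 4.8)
The plan is to reduce the verification of the differential equation (\ref{eq2.2}) to the component-wise eigenvalue equations, exactly as suggested by the structure of the problem. Given the representation (\ref{eq1.2}) of $L_n^{\al,M,N}(x)$ together with the decomposition (\ref{eq1.14}) of $\mathcal{L}_x^{\al,M,N}$ and the ansatz (\ref{eq2.1}) for $\la_n^{\al,M,N}$, I would expand the product $\mathcal{L}_x^{\al,M,N}y_n(x)$ and collect terms according to the powers of $M$ and $N$. The $M^0N^0$ part is $[\mathcal{L}_x^\al+\la_n^\al]L_n^\al(x)=0$, which is the classical Laguerre equation (\ref{eq1.6}); the $M^1N^0$ part reduces, after using $[\mathcal{L}_x^\al+\la_n^\al]T_n^\al(x)$ together with the known Laguerre-type identity, to $[\breve{\mathcal{L}}_{2\al+4,x}^\al+\la_n^{\al,A}]T_n^\al(x)=0$, which is (\ref{eq1.12}) from \cite{Ma1}. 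The genuinely new content is therefore confined to (i) establishing the two identities (\ref{eq1.13}) — this is Proposition \ref{prop2.3} — and (ii) checking that the remaining cross-terms cancel. For the latter, the key point is the ``nested'' eigenvalue structure: for example the $N^1M^0$ coefficient collects $[\mathcal{L}_x^\al+\la_n^\al]$ acting on $U_n^\al$ together with $[\widetilde{\mathcal{L}}_{2\al+8,x}^\al+\la_n^{\al,B}]$ acting on $L_n^\al$ and on $U_n^\al$; because each of $\mathcal{E}_x^\al,\mathcal{F}_x^\al,\mathcal{G}_x^\al$ (and likewise each $\mathcal{H}_x^{\al,j}$) annihilates any polynomial of degree $<\al+2$ (the inner differentiation $D_x^{\al+3+j}[x^{\al+1}y(x)]$ kills such inputs when $y=L_0^\al$, etc.), the low-degree interactions collapse and only the genuine eigenvalue relations survive. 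I would organize this bookkeeping into a short lemma stating that $\widetilde{\mathcal{L}}_{2\al+8,x}^\al$ and $\widehat{\mathcal{L}}_{4\al+10,x}^\al$ annihilate $1$ and that all mixed products are governed by (\ref{eq1.6}), (\ref{eq1.12}), (\ref{eq1.13}).

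The heart of the proof is Proposition \ref{prop2.3}, i.e.\ the two relations in (\ref{eq1.13}). For the first, I would substitute the explicit form of $U_n^\al(x)$ from (\ref{eq1.3}) (or the alternative representation (\ref{eq2.26}) referenced in the text) into $\widetilde{\mathcal{L}}_{2\al+8,x}^\al U_n^\al(x)$ and evaluate each of the three pieces $\mathcal{E}_x^\al,\mathcal{F}_x^\al,\mathcal{G}_x^\al$ separately. Each piece has the shape $e^x x^{\epsilon}D_x^p\{e^{-x}r(x)D_x^p[x^{\al+1}y(x)]\}$ with $p\in\{\al+2,\al+3,\al+4\}$ and $r$ linear or quadratic, so the natural tool is the Rodrigues-type action of $D_x^p$ on products $x^{\al+1}\cdot(\text{Laguerre polynomial})$, combined with the classical contiguous and differentiation formulas $D_x L_m^\be(x)=-L_{m-1}^{\be+1}(x)$, $D_x[x^\be e^{-x}L_m^\be(x)]=(m+1)x^{\be-1}e^{-x}L_{m+1}^{\be-1}(x)$, and the three-term/ladder relations among $L_m^\be$ with shifted $\be$. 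After applying the outer $D_x^p$ and the outer factor $e^x x^\epsilon$ one lands, term by term, on a linear combination of classical Laguerre polynomials $L_k^\al$; summing the three contributions I expect massive cancellation leaving precisely $-\la_n^{\al,B}U_n^\al(x)$, with $\la_n^{\al,B}$ as in (\ref{eq1.5}). The second relation of (\ref{eq1.13}) is analogous but cleaner: $V_n^\al(x)$ is (up to a constant) $x^2L_{n-2}^{\al+4}(x)$, so $D_x^{\al+3+j}[x^{\al+1}V_n^\al(x)]$ is, by the Rodrigues formula, a constant multiple of $x^{\al+4-j}e^{x}\,D_x^{?}[\dots]$ — more concretely each $\mathcal{H}_x^{\al,j}$ acting on $V_n^\al$ produces a single Laguerre polynomial of degree $n$ times an explicit rational function of $n,\al,j$; summing over $j$ and recognizing the resulting sum as the defining expression of $\la_n^{\al,C}$ in (\ref{eq1.5}) finishes it. The role of the shift constants $q_j^\al$ in (\ref{eq2.6}) is exactly to make the coefficient of $D_x^{\al+3+j}$ produce the correct weight $(n-1-j)_{j+\al+3}/(j+\al+3)!$ in that sum.

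The main obstacle, and the step that will require real care rather than routine manipulation, is the explicit evaluation of $\widetilde{\mathcal{L}}_{2\al+8,x}^\al U_n^\al(x)$: $U_n^\al$ is itself a three-term combination of Laguerre polynomials with parameters $\al,\al+2,\al+4$ and prefactors $x^2,x,1$, and each of $\mathcal{E}_x^\al,\mathcal{F}_x^\al,\mathcal{G}_x^\al$ acts on $x^{\al+1}$ (or $x^\al$) times this, so one is confronted with a fairly large array of intermediate terms. The way to keep this tractable is to first establish a single master formula for $e^x x^\epsilon D_x^p\{e^{-x}r(x)D_x^p[x^{\al+1}x^sL_m^{\al+2s}(x)]\}$ for $s\in\{0,1,2\}$ and $r\in\{1,x,x+2\al+4\}$, each evaluated as a short explicit combination of $x^{s'}L_{m'}^{\al+2s'}(x)$; this is most efficiently done by iterating the two Rodrigues-type identities above rather than by multinomial expansion of $D_x^p$. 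Once that master formula is in hand, the proof of (\ref{eq1.13}) becomes a finite, fully mechanical (if lengthy) linear-algebra check — matching coefficients of the basis functions $L_{n}^\al,xL_{n-1}^{\al+2},x^2L_{n-2}^{\al+4}$ — and the symmetry statement and the orthogonality corollary then follow by the standard Lagrange-identity/integration-by-parts argument carried out in Section 3, using that every elementary component is of the self-adjoint form $e^x x^c D_x^p\{e^{-x}(\cdots)D_x^p[x^{c'}(\cdot)]\}$ and that the boundary terms at $0$ and $\infty$ are absorbed exactly by the point masses $M f(0)g(0)+N f'(0)g'(0)$ in (\ref{eq1.1}).
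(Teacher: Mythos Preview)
Your outline for Proposition~\ref{prop2.3} is broadly right in spirit (the paper also works via the Laguerre toolbox (\ref{eq2.14}) and expands everything in Laguerre series), but your reduction of Theorem~\ref{thm2.1} itself has a genuine gap.

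When you expand $\big[\mathcal{L}_x^{\al,M,N}+\la_n^{\al,M,N}\big]L_n^{\al,M,N}(x)$ in powers of $M,N$ you obtain \emph{nine} conditions (coefficients of $1,M,N,M^2,MN,N^2,M^2N,MN^2,M^2N^2$), not four. Only four of them are the ``diagonal'' eigenvalue relations (\ref{eq1.6}), (\ref{eq1.12}), (\ref{eq1.13}); the remaining five are genuine cross relations such as
\[
\big[\mathcal{L}_x^\al+\la_n^\al\big]U_n^\al(x)+\big[\widetilde{\mathcal{L}}_{2\al+8,x}^\al+\la_n^{\al,B}\big]L_n^\al(x)=0
\]
(the $M^0N^1$ coefficient), or the four-term $M^1N^1$ relation. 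Your argument that $\mathcal{E}_x^\al,\mathcal{F}_x^\al,\mathcal{G}_x^\al,\mathcal{H}_x^{\al,j}$ annihilate low-degree polynomials only dispatches the cases $n=0,1$; for general $n$, $L_n^\al$, $T_n^\al$, $U_n^\al$ are not annihilated by the higher-order pieces, and the cross terms do \emph{not} collapse to the diagonal ones. So Proposition~\ref{prop2.3} alone, together with (\ref{eq1.6}) and (\ref{eq1.12}), is insufficient to conclude (\ref{eq2.2}).

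The paper sidesteps this entirely by a different mechanism: it takes the already-established equation (\ref{eq1.4}) of \cite{KKB} as known, extracts from it the $M^0N^2$ and $M^2N^2$ coefficients $\big[\mathcal{B}_x^\al+\la_n^{\al,B}\big]U_n^\al=0$ and $\big[\mathcal{C}_x^\al+\la_n^{\al,C}\big]V_n^\al=0$, and combines these with Proposition~\ref{prop2.3} to get $\big[\mathcal{B}_x^\al-\widetilde{\mathcal{L}}_{2\al+8,x}^\al\big]U_n^\al=0$ and $\big[\mathcal{C}_x^\al-\widehat{\mathcal{L}}_{4\al+10,x}^\al\big]V_n^\al=0$. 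It then shows (easy) that both differences also kill $1$ and $x$, and (the key step) that $\{1,x\}\cup\{U_k^\al\}_{k\ge2}$ and $\{1,x\}\cup\{V_k^\al\}_{k\ge2}$ are each bases of $\mathbb{P}$, whence $\widetilde{\mathcal{L}}_{2\al+8,x}^\al=\mathcal{B}_x^\al$ and $\widehat{\mathcal{L}}_{4\al+10,x}^\al=\mathcal{C}_x^\al$ as operators on polynomials. Equation (\ref{eq2.2}) then follows immediately from (\ref{eq1.4}), with all cross relations inherited for free. If you want a self-contained proof that does not invoke \cite{KKB}, you must verify the five cross relations directly, which is substantially more work than what you sketched.
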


 Before we give the proof of this theorem which is based on Proposition \ref{prop2.3} below, we state some of its consequences.
 
For the first three values of the parameter, $\al =0,1,2$, formula (\ref{eq2.5})--(\ref{eq2.6}) states that 
 \begin{equation}
   \begin{aligned}
  e^{-x} x^{-1}\widehat{\mathcal{L}}_{10,x}^{\,0}&y(x)=\frac{2}{5!}D_x^5\big\lbrace e^{-x} x^3 D_x^5\big\lbrack xy(x)\big\rbrack\big\rbrace-
  \frac{3}{4!}D_x^4\big\lbrace e^{-x} x(x+2)D_x^4\big\lbrack xy(x)\big\rbrack\big\rbrace\\
  & +\frac{1}{3!}D_x^3\big\lbrace e^{-x}(x+3)D_x^3\big\lbrack xy(x)\big\rbrack\big\rbrace,\\
  2e^{-x} x^{-1}\widehat{\mathcal{L}}_{14,x}^{\,1}&y(x)=\frac{5}{7!}D_x^7
  \big\lbrace e^{-x} x^4 D_x^7\big\lbrack x^2 y(x)\big\rbrack\big\rbrace-
    \frac{10}{6!}D_x^6\big\lbrace e^{-x} x^2(x+2)D_x^6\big\lbrack x^2 y(x) \big\rbrack\big\rbrace\\
   & +\frac{6}{5!}D_x^5\big\lbrace e^{-x} x(x+\frac{10}{3})D_x^5\big\lbrack x^2y(x)\big\rbrack\big\rbrace-
     \frac{1}{4!}D_x^4\big\lbrace e^{-x}(x+4)D_x^4\big\lbrack x^2 y(x) \big\rbrack\big\rbrace,\\
  3e^{-x} x^{-1}\widehat{\mathcal{L}}_{18,x}^{\,2}&y(x)=\frac{14}{9!}D_x^9
  \big\lbrace e^{-x} x^5 D_x^9\big\lbrack x^3 y(x)\big\rbrack\big\rbrace-
     \frac{35}{8!}D_x^8\big\lbrace e^{-x} x^3(x+2)D_x^8\big\lbrack x^3 y(x) \big\rbrack\big\rbrace\\
   & +\frac{30}{7!}D_x^7\big\lbrace e^{-x} x^2(x+\frac{7}{2})D_x^7\big\lbrack x^3y(x) \big\rbrack\big\rbrace-
      \frac{10}{6!}D_x^6\big\lbrace e^{-x}x(x+\frac{9}{2})D_x^6\big\lbrack x^3 y(x) \big\rbrack\big\rbrace\\ 
  & +\frac{1}{5!}D_x^5\big\lbrace e^{-x} x(x+5)D_x^5\big\lbrack x^3y(x) \big\rbrack\big\rbrace. 
    \end{aligned}
  \label{eq2.7}
  \end{equation}
It is not hard to see that in these three cases, the new representations (2.7) coincide with those stated in \cite[Sec. 4.1--3]{K1}, see also \cite{K2}. 

Expanding the differential expressions (\ref{eq2.3})--(\ref{eq2.4}) and (\ref{eq2.5})--(\ref{eq2.6}) into the series 
 \begin{equation}
  \widetilde{\mathcal{L}}_{2\al+8,x}^{\al}y(x)=\sum_{i=1}^{2\al+8} b_i^\al(x)D_x^i y(x),\quad \widehat{\mathcal{L}}_{4\al+10,x}^{\al} y(x)=\sum_{i=1}^{4\al+10} c_i^\al(x)D_x^i y(x), 
 \label{eq2.8}
  \end{equation}
the highest coefficient functions obviously arise in the differential expressions   $\mathcal{E}_x^\al y(x)$ and
 \begin{equation*}
\mathcal{H}_x^{\al,\al+2}y(x)=\frac{1}{(\al +1)(2\al+5)!}e^x x\,D_x^{2\al+5}\big\lbrace e^{-x}x^{\al+3} D_x^{2\al+5}\big\lbrack x^{\al+1}y(x)\big\rbrack\big\rbrace,
 \end{equation*}
respectively. Hence, we obtain
 \begin{equation}
 b_{2\al+8}^\al(x)=\frac{(\al+2)(-1)^{\al+1}}{(\al +1)(\al+4)!}x^{\al+4},\quad
 c_{4\al+10}^\al(x)=\frac{(1)_{2\al+4}}{(\al+2)!(\al+3)!}\frac{x^{2\al+5}}{(\al+1) (2\al +5)!}.
\label{eq2.9}
  \end{equation}
These values coincide with $\be_{2\al+8}(\al,x),\,\ga_{4\al+10}(\al,x)$ stated in (\ref{eq1.10}). Similarly, it follows that 
 \begin{equation}
 \breve{\mathcal{L}}_{2\al+4,x}^\al y(x)=\sum_{i=1}^{2\al+4} a_i^\al(x)D_x^i y(x),\;   a_{2\al+4}^\al(x)\equiv a_{2\al+4}(\al,x)=\frac{(-1)^{\al+1}}{(\al+2)!}x^{\al+2}.
 \label{eq2.10}
  \end{equation}
Finally, we conclude that the series (\ref{eq2.8}) and (\ref{eq2.10}) satisfy the property (\ref{eq1.11}), as well.
\begin{corollary}
\label{cor2.2}
 For each $\al \in \mathbb{N}_0$, 
 \begin{equation}
   \begin{aligned}
 &\sum_{i=1}^{2\al+4} a_i^\al(x)=e^{-x}\breve{\mathcal{L}}_{2\al+4,x}^\al [e^x]=0, \\
 & \sum_{i=1}^{2\al+8} b_i^\al(x)=e^{-x}\widetilde{\mathcal{L}}_{2\al+8,x}^{\al}[e^x]=0,\\
 &\sum_{i=1}^{4\al+10} c_i^\al(x)=e^{-x}\widehat{\mathcal{L}}_{4\al+10,x}^{\al}[e^x]=0. 
   \end{aligned}
 \label{eq2.11}
  \end{equation}
\end{corollary}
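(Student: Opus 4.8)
The plan is to combine two simple observations. First, for \emph{any} linear differential expression $\mathcal{D}y(x)=\sum_{i} d_i(x)D_x^i y(x)$ one has $D_x^i e^{x}=e^{x}$ for every $i$, so
\[
e^{-x}\mathcal{D}[e^{x}]=\sum_{i} d_i(x).
\]
Applied to the three expansions (\ref{eq2.8}) and (\ref{eq2.10}), this immediately yields the right-hand equalities in (\ref{eq2.11}). It then remains to show that $\breve{\mathcal{L}}_{2\al+4,x}^{\al}[e^{x}]$, $\widetilde{\mathcal{L}}_{2\al+8,x}^{\al}[e^{x}]$ and $\widehat{\mathcal{L}}_{4\al+10,x}^{\al}[e^{x}]$ all vanish identically. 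For this I would use the second observation: by Theorem \ref{thm2.1} each of these operators is a fixed linear combination of elementary blocks of the common shape
\[
\mathrm{const}\cdot e^{x}\,x^{m}\,D_x^{k}\bigl\{e^{-x}\,p(x)\,D_x^{k}[\,x^{r}y(x)\,]\bigr\},
\]
with $p$ a fixed polynomial (namely $p\equiv 1$ for $\breve{\mathcal{L}}_{2\al+4,x}^{\al}$; $p(x)=x^{2}$, $p(x)=x+2\al+4$, $p(x)=x+2\al+4$ for $\mathcal{E}_x^{\al},\mathcal{F}_x^{\al},\mathcal{G}_x^{\al}$; and $p(x)=x^{j}(x+q_j^{\al})$ for $\mathcal{H}_x^{\al,j}$).

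Substituting $y(x)=e^{x}$ into such a block and invoking the Leibniz rule,
\[
D_x^{k}[x^{r}e^{x}]=e^{x}\sum_{l=0}^{\min(k,r)}\binom{k}{l}\frac{r!}{(r-l)!}x^{r-l},
\]
one sees that $e^{-x}p(x)D_x^{k}[x^{r}e^{x}]$ is a polynomial of degree $r+\deg p$, and the outer operator $D_x^{k}$ annihilates it precisely when $r+\deg p<k$. I would then check this inequality block by block: for $\breve{\mathcal{L}}_{2\al+4,x}^{\al}$, $r=\al+1$, $\deg p=0$, $k=\al+2$; for $\mathcal{E}_x^{\al}$, $r=\al+1$, $\deg p=2$, $k=\al+4$; for $\mathcal{F}_x^{\al}$, $r=\al+1$, $\deg p=1$, $k=\al+3$; for $\mathcal{G}_x^{\al}$, $r=\al$, $\deg p=1$, $k=\al+2$; and for $\mathcal{H}_x^{\al,j}$, $r=\al+1$, $\deg p=j+1$, $k=\al+3+j$. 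In every case the strict inequality $r+\deg p<k$ holds (with the gap equal to exactly one), so every block applied to $e^{x}$ gives $0$; hence, via (\ref{eq2.3})--(\ref{eq2.4}) and (\ref{eq2.5})--(\ref{eq2.6}), so do $\widetilde{\mathcal{L}}_{2\al+8,x}^{\al}[e^{x}]$ and $\widehat{\mathcal{L}}_{4\al+10,x}^{\al}[e^{x}]$, and trivially $\breve{\mathcal{L}}_{2\al+4,x}^{\al}[e^{x}]$.

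There is no genuine obstacle here: once Theorem \ref{thm2.1} has supplied the Rodrigues-type closed forms, the statement reduces to a one-line degree count. The only points requiring a little care are keeping the $e^{x}$ and $e^{-x}$ factors matched so that the passage from $\mathcal{D}[e^{x}]$ to the coefficient sum is legitimate, and verifying the inequality $r+\deg p<k$ uniformly in $j$ for the family $\mathcal{H}_x^{\al,j}$, $0\le j\le\al+2$, where both sides increase with $j$ while the margin stays fixed. Finally, using the identifications of $\breve{\mathcal{L}}_{2\al+4,x}^{\al},\widetilde{\mathcal{L}}_{2\al+8,x}^{\al},\widehat{\mathcal{L}}_{4\al+10,x}^{\al}$ with the operators of (\ref{eq1.7}) and (\ref{eq1.9}) from Theorem \ref{thm2.1}, this argument also re-derives the property (\ref{eq1.11}) directly from the new representations, bypassing the hypergeometric identities of \cite{KKB}.
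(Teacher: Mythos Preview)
Your proof is correct and takes essentially the same approach as the paper: the paper proves the single identity $D_x^{s}\{e^{-x}x^{r}D_x^{s}[x^{s-t}e^{x}]\}=0$ for $0\le r<t\le s$ via the Leibniz expansion and then checks that each building block of (\ref{eq1.8}), (\ref{eq2.4}), (\ref{eq2.5}) fits this pattern. Your formulation with a general polynomial factor $p(x)$ and the degree count $r+\deg p<k$ is a minor repackaging of the same argument.
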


\begin{Proof}
For $0 \le r<t \le s,$ there holds
 \begin{equation*}
D_x^s\big\lbrace e^{-x}x^r D_x^s\big\lbrack x^{s-t}e^x\big\rbrack\big\rbrace
=D_x^s\bigg\lbrace \sum_{k=t}^{s}\binom{s}{k}\frac{(s-t)!}{(k-t)!}x^{k-t+r}\bigg\rbrace =0.
 \end{equation*}
Hence, all three identities in (2.11) are verified by observing that
\begin{equation*}
   \begin{aligned}
 & D_x^{\al+2}\big\lbrace e^{-x}D_x^{\al+2}\big\lbrack x^{\al+1} e^x\big\rbrack\big\rbrace=0, \\
 & D_x^{\al+4}\big\lbrace e^{-x}x^2 D_x^{\al+4}\big\lbrack x^{\al+1} e^x\big\rbrack\big\rbrace=0,\,D_x^{\al+2+j}\big\lbrace e^{-x}(x+2\al+4)\, D_x^{\al+2+j}\big\lbrack x^{\al+j} e^x\big\rbrack\big\rbrace=0,\;j=0,1,\\
 & D_x^{\al+3+j}\big\lbrace e^{-x}x^j(x+q_j^\al)\, D_x^{\al+3+j}\big\lbrack x^{\al+1} e^x\big\rbrack\big\rbrace=0,\;j=0,\cdots \al+2.\hspace{4,5cm}\square
   \end{aligned}
  \end{equation*}
 \end{Proof}
\begin{proposition}
\label{prop2.3}
For $\al \in \mathbb{N}_0$ and all $n \ge 2$, the components $U_n^\al(x)$  and   $V_n^\al(x)$ of the Sobolev-Laguerre polynomials (1.2) satisfy the spectral differential equations (1.13), i. e.
 \begin{equation}
   \begin{aligned}
 & \widetilde{\mathcal{L}}_{2\al +8,x}^\al U_n^\al(x)=-\la_n^{\al,B}\,U_n^\al(x),\\
 & \widehat{\mathcal{L}}_{4\al +10,x}^\al V_n^\al(x)=-\la_n^{\al,C}\,V_n^\al(x).
     \end{aligned}
 \label{eq2.12}
  \end{equation}
\end{proposition}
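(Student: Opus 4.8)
The plan is to verify the two eigenvalue equations in (\ref{eq2.12}) by direct computation from the explicit forms of $U_n^\al$ and $V_n^\al$ in (\ref{eq1.3}), using a short toolkit of classical Laguerre identities: (i) $D_x^mL_n^\al(x)=(-1)^mL_{n-m}^{\al+m}(x)$; (ii) $D_x^m\big\{x^\be L_n^\be(x)\big\}=\dfrac{(n+\be)!}{(n+\be-m)!}\,x^{\be-m}L_n^{\be-m}(x)$; (iii) $D_x^m\big\{e^{-x}x^\be L_n^\be(x)\big\}=\dfrac{(n+m)!}{n!}\,e^{-x}x^{\be-m}L_{n+m}^{\be-m}(x)$, which is immediate from the Rodrigues formula; (iv) the reflection $L_m^{-k}(x)=(-x)^k\dfrac{(m-k)!}{m!}L_{m-k}^k(x)$, $0\le k\le m$; together with the contiguous and connection relations, notably $L_m^{(\be+\mu)}(x)=\sum_{k=0}^m\binom{m-k+\mu-1}{m-k}L_k^{(\be)}(x)$, which serve to realign a Laguerre superscript with the power of $x$ multiplying it. The whole computation runs closely parallel to the already established case (\ref{eq1.12}) of $\breve{\mathcal{L}}_{2\al+4,x}^\al$ acting on $T_n^\al$, which I would use as a model: there one pulls $x^{\al+1}$ inside, differentiates by (ii), inserts $e^{-x}$ and differentiates again by (iii), multiplies by $e^xx$, and recognizes the result as $-\la_n^{\al,A}T_n^\al$.

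The second equation in (\ref{eq2.12}) is the more transparent one. Since $V_n^\al$ is a constant multiple of the single Laguerre block $x^2L_{n-2}^{\al+4}(x)$, and since the coefficients $\dfrac{(\al+3-j)_{2j}}{j!(j+1)!}$ attached to $\mathcal{H}_x^{\al,j}$ in (\ref{eq2.5}) coincide termwise with those in the defining sum (\ref{eq1.5}) for $\la_n^{\al,C}$ (the prefactor $(\al+1)^{-1}$ also being common to both), the assertion reduces to the single block identity
\[
\mathcal{H}_x^{\al,j}\big[x^2L_{n-2}^{\al+4}(x)\big]=-\frac{1}{\al+1}\,\frac{(n-1-j)_{j+\al+3}}{(j+\al+3)!}\,x^2L_{n-2}^{\al+4}(x),\qquad 0\le j\le\al+2,
\]
whose right-hand side automatically vanishes for $j>n-2$, reproducing the truncation $\min(n-2,\al+2)$ in (\ref{eq1.5}) (indeed $D_x^{\al+3+j}$ then annihilates the polynomial $x^{\al+3}L_{n-2}^{\al+4}$ of degree $n+\al+1$). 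For $j\le n-2$ I would set $y=x^2L_{n-2}^{\al+4}$, so $x^{\al+1}y=x^{\al+3}L_{n-2}^{\al+4}(x)$; evaluate the inner $D_x^{\al+3+j}$ by means of the connection formula, (ii) and the reflection (iv); multiply by the middle factor $x^j(x+q_j^\al)$; insert $e^{-x}$ and apply $D_x^{\al+3+j}$ a second time via (iii) and the contiguous relations; and finally multiply by $e^xx$. The crucial point is that the particular value $q_j^\al=(\al+2)^{-1}(\al+2-j)(\al+3+j)$ of (\ref{eq2.6}) is exactly what forces the output of this last sandwich-differentiation to be again proportional to $x^2L_{n-2}^{\al+4}(x)$ rather than a genuine two-term combination; collecting the $j$-dependent constant then yields the displayed scalar.

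For the first equation in (\ref{eq2.12}) there is no such one-block reduction, since $U_n^\al$ genuinely spreads over the three blocks $L_n^\al$, $xL_{n-1}^{\al+2}$, $x^2L_{n-2}^{\al+4}$ with the weights $-(n+\al+1)$, $-(\al+2)$, $\tfrac{1}{n-1}$ dictated by $U_{n,3}^\al,U_{n,2}^\al,U_{n,1}^\al$ in (\ref{eq1.3}), and correspondingly $\widetilde{\mathcal{L}}_{2\al+8,x}^\al$ is the genuine three-part expression (\ref{eq2.3})--(\ref{eq2.4}). The plan is to apply $\mathcal{E}_x^\al$, $\mathcal{F}_x^\al$ and $\mathcal{G}_x^\al$ to $U_n^\al$ term by term with the same toolkit — noting that $\mathcal{E}_x^\al$ and $\mathcal{F}_x^\al$ act through $x^{\al+1}y$ while $\mathcal{G}_x^\al$ acts through $x^\al y$, and that the linear middle factor $x+2\al+4$ carried by $\mathcal{F}_x^\al$ and $\mathcal{G}_x^\al$ is precisely the first-degree factor a single sandwich-differentiation produces — then to add the three outcomes and check that the numerous intermediate terms collapse to a scalar multiple of $U_n^\al$, namely $(-1)^{\al+1}(\al+1)(\al+4)!\,\la_n^{\al,B}\,U_n^\al(x)$, which is equivalent to $\widetilde{\mathcal{L}}_{2\al+8,x}^\al U_n^\al=-\la_n^{\al,B}U_n^\al$. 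Heuristically, the two summands $\tfrac{\al+2}{\al+1}\tfrac{(n-2)_{\al+4}}{(\al+4)!}$ and $\tfrac{1}{\al+1}\tfrac{(n-1)_{\al+3}}{(\al+3)!}$ of $\la_n^{\al,B}$ should arise as the leading contributions of the order-$(2\al+8)$ piece $\mathcal{E}_x^\al$ and the order-$(2\al+6)$ piece $\mathcal{F}_x^\al$, while the order-$(2\al+4)$ piece $\mathcal{G}_x^\al$ serves precisely to rebalance the remaining cross-terms.

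I expect the main obstacle to be exactly this collapse in the $U_n^\al$ case: the individual blocks $L_n^\al$ and $xL_{n-1}^{\al+2}$ are \emph{not} eigenfunctions of $\widetilde{\mathcal{L}}_{2\al+8,x}^\al$, only the specific combination $U_n^\al$ is, so the argument cannot be run block by block but instead demands careful bookkeeping of the mismatched superscripts generated at every stage and a verification that everything outside the $U_n^\al$-direction cancels. This rests on repeated use of the connection and contiguous relations and is where the bulk of the calculation lies; the $V_n^\al$ case, by contrast, amounts to a single clean sandwich-differentiation identity.
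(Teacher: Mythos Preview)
Your plan for the $V_n^\al$ case rests on the claim that each single building block $\mathcal{H}_x^{\al,j}$ sends $x^2L_{n-2}^{\al+4}(x)$ to a scalar multiple of itself, namely $-\tfrac{1}{\al+1}\tfrac{(n-1-j)_{j+\al+3}}{(j+\al+3)!}\,x^2L_{n-2}^{\al+4}(x)$. This is false. Already for $\al=0$, $n=3$ one has $x^2L_1^4(x)=5x^2-x^3$, and a direct evaluation of $\mathcal{H}_x^{0,0}$ on it gives $4x^3-17x^2-12x$, while your formula predicts $4x^3-20x^2$; similarly $\mathcal{H}_x^{0,1}$ yields $x^3-6x^2+4x$ rather than $x^3-5x^2$. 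Only after forming the weighted sum $\sum_j\tfrac{(\al+3-j)_{2j}}{j!(j+1)!}\mathcal{H}_x^{\al,j}$ do the extra pieces cancel and one recovers $-\la_n^{\al,C}\,x^2L_{n-2}^{\al+4}$. So the special value of $q_j^\al$ does \emph{not} force the single sandwich output back onto $x^2L_{n-2}^{\al+4}$; the termwise reduction you propose cannot work.

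What the paper actually does is compute $\mathcal{H}_x^{\al,j}\big[x^2L_{n-2}^{\al+4}\big]$ as a Laguerre sum $\sum_r c_r(j)L_r^{\al+3}(x)$ (plus an $xL_r^{\al+3}$ part), then sum over $j$ and interchange the order of summation; the inner $j$-sums are evaluated by Chu--Vandermonde, and only at that stage do the non-eigenvalue contributions collapse to zero. In other words, the $V$-case is not the ``single clean sandwich-differentiation identity'' you describe --- it is a genuine double-sum cancellation, closer in spirit to the $U$-case than your proposal suggests. Your outline for $U_n^\al$ (apply each of $\mathcal{E}_x^\al,\mathcal{F}_x^\al,\mathcal{G}_x^\al$ to each of $U_{n,1}^\al,U_{n,2}^\al,U_{n,3}^\al$ and track the cancellations) is essentially the paper's strategy, but it remains only a plan; the actual execution involves expanding everything in a common Laguerre series and verifying coefficient by coefficient that the off-eigenvalue terms vanish.
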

\begin{Proof}
Concerning the first equation in (\ref{eq2.12}), we split up the corresponding component of the eigenvalue parameter by 
\begin{equation*}
\la_n^{\al,B}=\frac{(-1)^\al}{(\al+1)(\al+4)!}\big\lbrack \la_{n,1}^{\al,B}+\la_{n,2}^{\al,B} \big\rbrack,\;\bigg\lbrace
\begin{aligned}
&\la_{n,1}^{\al,B}=(-1)^\al(\al+2)(n-2)_{\al+4} \\
&\la_{n,2}^{\al,B}=(-1)^\al(\al+4)(n-1)_{\al+3}
\end{aligned}
\end{equation*}
as well as the expressions $\mathcal{F}_x^\al y(x)$ and $\mathcal{G}_x^\al y(x)$  in definition (\ref{eq2.4}) by 
 \begin{equation*}
 \begin{aligned}
     &\mathcal{F}_{x,1}^\al y(x)=(\al+4)e^x x\,D_x^{\al+3}\big\lbrace e^{-x} x\,D_x^{\al+3}\big\lbrack x^{\al+1}y(x)\big\rbrack\big\rbrace,\\
   &\mathcal{F}_{x,2}^\al y(x)=2(\al+2)(\al+4)e^x x\,D_x^{\al+3}\big\lbrace e^{-x} D_x^{\al+3}\big\lbrack x^{\al+1}y(x)\big\rbrack\big\rbrace,\\
  &\mathcal{G}_{x,1}^\al y(x)=(\al+1)(\al+3)(\al+4)\,e^x D_x^{\al+2}\big\lbrace e^{-x} x\,D_x^{\al+2}\big\lbrack x^{\al} y(x)\big\rbrack\big\rbrace,\\
  &\mathcal{G}_{x,2}^\al y(x)=2(\al+1)_4\,e^x D_x^{\al+2}\big\lbrace e^{-x} D_x^{\al+2}\big\lbrack x^{\al} y(x)\big\rbrack\big\rbrace. 
  \end{aligned}
       \end{equation*}
Then, by definition (\ref{eq1.3}) of $U_n^\al(x)$, it is to show that 
\begin{equation}
 \big\lbrack \mathcal{E}_x^\al +\mathcal{F}_{x,1}^\al+\mathcal{F}_{x,2}^\al+\mathcal{G}_{x,1}^\al+\mathcal{G}_{x,2}^\al
 +\la_{n,1}^{\al,B}+\la_{n,2}^{\al,B}\,\big\rbrack
 \big\lbrack U_{n,1}^\al(x)+U_{n,2}^\al(x)+U_{n,3}^\al(x)\big\rbrack=0.
  \label{eq2.13}
  \end{equation}
Applying each operation in the first bracket separately to the three functions in the second one, we have to evaluate no fewer than 21 different pieces, namely, for $i=1,2$ and $j=1,2,3$, 
\begin{equation*}
 E_j =\mathcal{E}_x^\al\,(U_{n,j}^\al,x),\,F_{i,j} =\mathcal{F}_{x,i}^\al\,(U_{n,j}^\al,x),\,
 G_{i,j} =\mathcal{G}_{x,i}^\al\,(U_{n,j}^\al,x),\,\La_{i,j} =\la_{n,i}^{\al,B}\cdot  U_{n,j}^\al(x).
  \end{equation*}
To this end, we frequently make use of some well-known recurrence and differentiation formulas for the Laguerre polynomials, cf. e. g. \cite[10.12]{HTF2}. For suited values of $\ga,n$, these are
\begin{equation}
 \begin{aligned}
 & L_n^\ga(x)=L_n^{\ga+1}(x)-L_{n-1}^{\ga+1}(x),\quad
 x\,L_n^\ga(x)=(n+\ga)L_n^{\ga-1}(x)-(n+1)L_{n+1}^{\ga-1}(x),\\
 & D_x L_n^\ga(x)=-L_{n-1}^{\ga+1}(x),\quad
  D_x \lbrack x^\ga L_n^\ga(x)\rbrack =(n+\ga)x^{\ga-1}L_n^{\ga-1}(x),\\
  &  D_x \lbrack e^{-x} L_n^\ga(x)\rbrack=-e^{-x} L_n^{\ga+1}(x), \quad
   D_x \lbrack e^{-x}x^\ga L_n^\ga(x)\rbrack =(n+1)
   e^{-x}x^{\ga-1}L_{n+1}^{\ga-1}(x),\\ 
 \end{aligned}
   \label{eq2.14}
  \end{equation}
and, consequently, $L_n^{\ga+2}(x)=\sum_{k=0}^{n}L_k^{\ga+1}(x)=\sum_{k=0}^{n}(n+1-k)L_k^\ga(x)$. First of all, we observe that problem (\ref{eq2.13}) slightly simplifies because of
\begin{equation}
 E_2 +\La _{1,2}=0,\quad F_{1,2}+\La _{2,2}=0,\quad F_{1,3}+G_{1,3}+\La _{2,3}=0.
  \label{eq2.15}
  \end{equation}
In fact, by combining the two formulas
 \begin{equation*}
 \begin{aligned}
 &D_x^{\al+4} \lbrack x^{\al +2} L_{n-1}^{\al+2}(x)\rbrack =(n)_{\al +2}D_x^2 L_{n-1}^0(x)=(n)_{\al +2}L_{n-3}^2(x),\\
&D_x^{\al+4} \lbrack e^{-x}x^2 L_{n-3}^2(x)\rbrack =(-1)^\al (n-2)_2\,e^{-x}L_{n-1}^{\al +2}(x),
  \end{aligned}
       \end{equation*}
we obtain 
 \begin{equation*}
 \begin{aligned}
 E_2&=-(\al+2)\,e^x x\,D_x^{\al+4} \big\lbrace e^{-x}x^2 D_x^{\al+4} \lbrack -(\al +2)x^{\al+2} L_{n-1}^{\al+2}(x) \rbrack \big\rbrace\\
 &=(-1)^\al (\al+2)^2 (n-2)_{\al+4}\,x\,L_{n-1}^{\al +2}(x)=-\La_{1,2}.
  \end{aligned}
       \end{equation*}
The second identity in (\ref{eq2.15}) holds analogously, and, omitting some intermediate steps, the third one follows by
\begin{equation*}
\begin{aligned}
 \mathcal{F}_{x,1}&L_n^\al(x)+ \mathcal{G}_{x,1}L_n^\al(x)+ \la_{n,2}^{\al ,B}\,L_n^\al(x)\\
 =&(-1)^\al (\al +4)(n+1)_\al \bigg\lbrace  x\,\big\lbrack (n-1)nL_{n-1}^{\al +2}(x)
 +(\al +1)nL_{n-1}^{\al +3}(x)-(\al +1)L_{n-1}^{\al+4}(x)\big\rbrack\\
 &-(\al +1)(\al +3)\,\big\lbrack nL_{n-1}^{\al +2}(x)-L_{n-1}^{\al +3}(x)\big\rbrack
 +(n-1)n(n+\al +1)L_n^\al (x) \bigg\rbrace =0.
 \end{aligned}
 \end{equation*}
Concerning all other pieces, our strategy is first to expand them into Laguerre series with parameter $\al+2$ and then to cluster the resulting terms appropriately. For instance, we have
\begin{equation}
\begin{aligned}
&G_{1,1}+G_{1,2}=\frac{1}{n-1} \mathcal{G}_{x,1}^\al\big\lbrack x^2 L_{n-2}^{\al+4}(x) \big\rbrack -(\al +2)\,\mathcal{G}_{x,1}^\al \big\lbrack  x L_{n-1}^{\al +2}(x)\big\rbrack \\
&=(-1)^{\al +1}(\al +1)(\al +3)(\al +4) \;\bigg\lbrace\sum_{s=0}^{n-2} \frac{n-1-s}{n-1}(s+1)_{\al +2}\;\cdot\\
&\hspace{1cm} \big\lbrack (s+1)L_{s+1}^{\al +1}(x)-sL_s^{\al +1}(x)\big\rbrack
+(\al +2) \sum_{s=0}^{n-2}(s+1)_{\al +2}L_{s+1}^{\al +1}(x)\bigg\rbrace\\
&=(-1)^\al(\al +1)(\al +3)^2(\al +4)\;\bigg\lbrace \frac{(\al +2)!}{n-1}L_0^{\al +2}(x)\\
&\hspace{1cm} +\sum_{s=1}^{n-2}\frac{(\al+3)s+\al+2}{n-1}(s+1)_{\al+1} L_s^{\al+2}(x) -(n-1)_{\al +2}L_{n-1}^{\al +2}(x) \bigg\rbrace.
 \end{aligned}
  \label{eq2.16}
 \end{equation}
Moreover we obtain by some tedious, but straightforward calculations, each guided by a numerical verification for small values of the parameters,  
\begin{equation}
\begin{aligned}
&E_1+F_{1,1}+\La_{1,1}+\La_{2,1}
=(-1)^\al(\al +3)(\al +4)\;\bigg\lbrace \frac{(\al+3)(\al+3)!}{n-1}L_0^{\al +2}(x)\\
&+\sum_{s=1}^{n-2}\frac{\phi(\al ,s)}{(\al +4)(n-1)}(s+1)_{\al +1}L_s^{\al +2}(x)-
  \big\lbrack (\al +2)n-\al -1 \big\rbrack (n-1)_{\al +2}L_{n-1}^{\al +2}(x) \bigg\rbrace \,,
 \end{aligned}
  \label{eq2.17}
 \end{equation}
where
\begin{equation*}
\begin{aligned}
\phi(\al ,s)&=\big\lbrack(\al +2)s-\al \big\rbrack (s-1)s^2-\big\lbrack(\al+2)s+2 \big\rbrack (2s+\al+3)s(s+\al+2)\\
&+\big\lbrack(\al +2)s+\al+4 \big\rbrack (s+\al+2)(s+\al+3)^2 \\
&=(\al+3)(\al+4)\big\lbrack 2(\al +2)s^2+(\al+3)^2s+(\al+2)(\al+3)\big\rbrack,
 \end{aligned}
  \end{equation*}
and
\begin{equation}
\begin{aligned}
&(E_3+\La_{1,3})+F_{2,1}+(F_{2,2}+F_{2,3}+G_{2,1}+G_{2,2}+G_{2,3})
=(-1)^{\al+1}(\al +2)_3\,\cdot\\
&\bigg\lbrace \frac{2(\al+3)!}{n-1}L_0^{\al +2}(x)+\sum_{s=1}^{n-2}
\frac{2(\al+3)(s+1)}{n-1}(s+1)_{\al +2}L_s^{\al +2}(x)-
  (n-1)_{\al +3}L_{n-1}^{\al +2}(x) \bigg\rbrace \,.
 \end{aligned}
  \label{eq2.18}
 \end{equation}
Collecting the coefficients of $L_0^{\al +2}(x)$ as well as of $L_{n-1}^{\al +2}(x)$ in (\ref{eq2.16})--(\ref{eq2.18}), both sums clearly vanish. Finally we notice that for each $1 \le s \le n-2$ , the coefficients of $L_s^{\al +2}(x)$ add up to 
\begin{equation*}
\begin{aligned}
(-1)^\al\frac{(\al+3)(\al+4)}{n-1}
&\bigg\lbrace (\al +1)(\al +3)\big\lbrack(\al +3)s+\al+2 \big\rbrack \\
&-2(\al+2)(\al+3)(s+1)(s+\al +2)+\frac{1}{\al +4}\phi(\al ,s)\bigg\rbrace
(s+1)_{\al+1} \,.
 \end{aligned}
   \end{equation*}
Since the factor in curly brackets is always zero, we arrive at the first equation in (\ref{eq2.12}).

As to the second one, we have to show that
\begin{equation*}
\widehat{\mathcal{L}}_{4\al +10,x}^\al \big\lbrack x^2L_{n-2}^{\al+4}(x)\big\rbrack\\
=-\la_n^{\al,C}x^2L_{n-2}^{\al+4}(x),\;n \ge 2. 
\end{equation*}
By definition (\ref{eq2.5}) and (\ref{eq2.6}), 
\begin{equation}
\begin{aligned}
\frac{\al +1}{x}&\widehat{\mathcal{L}}_{4\al +10,x}^\al \big\lbrack x^2L_{n-2}^{\al+4}(x) \big\rbrack\\
&=\sum_{j=0}^{\al +2}\frac{(\al+3-j)_{2j}(-1)^{\al +j}\,e^x}{j!(j+1)!(\al +3+j)!}
   \,D_x^{\al+3+j}\big\lbrace e^{-x}x^{j+1}\,D_x^{\al+3+j}\big\lbrack x^{\al+3}L_{n-2}^{\al +4}(x)\big\rbrack\big\rbrace\\
&+\sum_{j=0}^{\al +2}\frac{(\al+2-j)_{2j+1}(-1)^{\al +j}\,e^x}{j!(j+1)!(\al +2+j)!(\al +2)}
   \,D_x^{\al+3+j}\big\lbrace e^{-x}x^j\,D_x^{\al+3+j}\big\lbrack x^{\al+3}L_{n-2}^{\al +4}(x)\big\rbrack\big\rbrace.   
 \end{aligned}
  \label{eq2.19}
\end{equation}
Employing again our tool box (\ref{eq2.14}), we find that
\begin{equation*}
\begin{aligned}
D_x^{\al+3+j}\big\lbrack x^{\al+3}L_{n-2}^{\al +4}(x)\big\rbrack&=
\sum_{r=0}^{n-2}D_x^j\,D_x^{\al+3}\big\lbrack x^{\al+3}L_r^{\al +3}(x)\big\rbrack\\
&=\sum_{r=0}^{n-2}\frac{(r+\al +3)!}{r!}D_x^j L_r^0(x)=(-1)^j
\sum_{r=j}^{n-2}\frac{(r+\al +3)!}{r!}L_{r-j}^j(x)
 \end{aligned}
\end{equation*}
and thus
\begin{equation}
\begin{aligned}
(-1)^{\al +j}&e^x\,D_x^{\al+3+j}\big\lbrace e^{-x}x^j\,D_x^{\al+3+j}\big\lbrack x^{\al+3}L_{n-2}^{\al +4}(x)\big\rbrack\big\rbrace\\
&=(-1)^\al \sum_{r=j}^{n-2}\frac{(r+\al+3)!}{r!}e^x\,D_x^{\al+3}\,D_x^j\big\lbrack e^{-x}x^jL_{r-j}^j(x)\big\rbrack\\   
&=(-1)^\al \sum_{r=j}^{n-2}\frac{(r+\al+3)!}{(r-j)!}e^x\,D_x^{\al+3}\big\lbrack e^{-x}L_r^0(x)\big\rbrack= -\sum_{r=j}^{n-2}\frac{(r+\al+3)!}{(r-j)!} L_r^{\al+3}(x).  
 \end{aligned}
  \label{eq2.20}
\end{equation}
Similarly,
\begin{equation}
\begin{aligned}
(-1)^{\al +j}&e^x\,D_x^{\al+3+j}\big\lbrace e^{-x}x^{j+1}\,D_x^{\al+3+j}\big\lbrack x^{\al+3}L_{n-2}^{\al +4}(x)\big\rbrack\big\rbrace\\
&=(-1)^\al \sum_{r=j}^{n-2}\frac{(r+\al+3)!}{r!}e^x\,D_x^{\al+2}\,D_x^{j+1}\big\lbrack e^{-x}x^{j+1}\{L_{r-j}^{j+1}(x)-L_{r-j-1}^{j+1}(x)\}\big\rbrack\\   
&=\sum_{r=j}^{n-2}\frac{(r+\al+3)!}{(r-j)!} \big\lbrack (r+1)L_{r+1}^{\al +2}(x)-(r-j)L_r^{\al +2}(x)\big\rbrack\\ 
&=\sum_{r=j}^{n-2}\frac{(r+\al+3)!}{(r-j)!} \big\lbrack -x\,L_r^{\al +3}(x)+(\al +3+j)L_r^{\al +2}(x)\big\rbrack.
 \end{aligned}
  \label{eq2.21}
\end{equation}
Inserting (\ref{eq2.21}) and (\ref{eq2.20}) into the identity (\ref{eq2.19}) then yields 
\begin{equation*}
\begin{aligned}
\frac{\al +1}{x}&\widehat{\mathcal{L}}_{4\al +10,x}^\al \big\lbrack x^2L_{n-2}^{\al+4}(x) \big\rbrack\\
&=-x\sum_{j=0}^{\min(n-2,\al +2)} \frac{(\al +3-j)_{2j}}{j!(j+1)!(\al +3+j)!}
\sum_{r=j}^{n-2}\frac{(r+\al +3)!}{(r-j)!}L_r^{\al +3}(x)\\
&+\sum_{j=0}^{\min(n-2,\al +2)} \frac{(\al +3-j)_{2j}}{j!(j+1)!(\al +2+j)!}
\sum_{r=j}^{n-2}\frac{(r+\al +3)!}{(r-j)!}L_r^{\al +2}(x)\\
&-\sum_{j=0}^{\min(n-2,\al +2)} \frac{(\al +2-j)_{2j+1}}{j!(j+1)!(\al +2+j)!(\al +2)} \sum_{r=j}^{n-2}\frac{(r+\al +3)!}{(r-j)!}L_r^{\al +3}(x)\\
&=:\Sigma_1+\Sigma_2+\Sigma_3.
 \end{aligned}
 \end{equation*}
Now we split up the inner sum of $\Sigma_1$ into the two parts
\begin{equation*}
\begin{aligned}
&\sum_{r=j}^{n-2}\frac{(r+\al +3)!}{(r-j)!}\big\lbrack L_r^{\al +4}(x)-L_{r-1}^{\al +4}(x)\big\rbrack\\
&=\frac{(n+\al +1)!}{(n-2-j)!}L_{n-2}^{\al +4}(x)+\sum_{r=\max(j,1)}^{n-2}
\bigg\lbrack \frac{(r+\al +2)!}{(r-1-j)!}-\frac{(r+\al +3)!}{(r-j)!}\bigg\rbrack 
L_{r-1}^{\al +4}(x)\\
&=\frac{(n+\al +1)!}{(n-2-j)!}L_{n-2}^{\al +4}(x)-(\al +3+j)\sum_{r=\max(j,1)}^{n-2}
\frac{(r+\al +2)!}{(r-j)!}L_{r-1}^{\al +4}(x).
 \end{aligned}
 \end{equation*}
Hence,
\begin{equation*}
\Sigma_1=-x\sum_{j=0}^{\al +2} \frac{(\al +3-j)_{2j}\,(n-1-j)_{j+\al+3}}{j!(j+1)!(\al +3+j)!}L_{n-2}^{\al +4}(x)+\Sigma_{1,2},
 \end{equation*}
where the first term equals $-(\al+1)\la_n^{\al,C}x\,L_{n-2}^{\al+4}(x)$  as required. The second term becomes
\begin{equation*}
\begin{aligned}
\Sigma_{1,2}&=x\sum_{j=0}^{\min(n-2,\al +2)} \frac{(\al +3-j)_{2j}}{j!(j+1)!(\al +2+j)!}\sum_{r=\max(j,1)}^{n-2}
\frac{(r+\al +2)!}{(r-j)!}L_{r-1}^{\al +4}(x)\\
&=\sum_{r=1}^{n-2}\bigg\lbrack \sum_{j=0}^{\min(r,\al+2)}\frac{(-\al -2)_j(-r)_j}{(2)_j\,j!}\bigg\rbrack\frac{(r+\al+2)!}{r!(\al+2)!}xL_{r-1}^{\al +4}(x).
 \end{aligned}
 \end{equation*}
Applying now the well-known Chu-Vandermonde summation formula 
\begin{equation}
{}_2F_1(-m,b;c;x)=(c-b)_m/(c)_m,\;c>0,\,m \in \mathbb{N}_0
  \label{eq2.22}
 \end{equation}
to the inner sum, we achieve
\begin{equation*}
\begin{aligned}
\Sigma_{1,2}&=\sum_{r=1}^{n-2}\frac{(r+\al +3)!}{(r+1)!(\al+3)!}
\frac{(r+\al +2)!}{r!(\al+2)!}xL_{r-1}^{\al +4}(x)\\
&=\sum_{r=1}^{n-2}\frac{(r+\al +3)!}{(r+1)!(\al+3)!}
\frac{(r+\al +2)!}{r!(\al+2)!}\big\lbrack (r+\al +3)L_{r-1}^{\al +3}(x)-rL_r^{\al +3}(x)\big\rbrack.
 \end{aligned}
 \end{equation*}
Analoguously,
\begin{equation*}
\begin{aligned}
\Sigma_2&=\sum_{r=0}^{n-2}\frac{(r+\al +3)!}{(r+1)!(\al+3)!}
\frac{(r+\al +3)!}{r!(\al+2)!}L_r^{\al +2}(x)\\
&=\sum_{r=0}^{n-2}\frac{(r+\al +3)!}{(r+1)!(\al+3)!}
\frac{(r+\al +3)!}{r!(\al+2)!}\big\lbrack L_r^{\al +3}(x)-L_{r-1}^{\al+3}(x) \big\rbrack
 \end{aligned}
 \end{equation*}
and 
\begin{equation*}
\begin{aligned}
\Sigma_3&=-\sum_{r=0}^{n-2}\bigg\lbrack \sum_{j=0}^{\min(r,\al+1)}\frac{(-\al -1)_j(-r)_j}{(2)_j)j!}\bigg\rbrack \frac{(r+\al +3)!}{r!(\al+2)!}L_r^{\al +3}(x)\\
&=-\sum_{r=0}^{n-2}\frac{(r+\al +2)!}{(r+1)!(\al+2)!}
\frac{(r+\al +3)!}{r!(\al+2)!} L_r^{\al +3}(x).
 \end{aligned}
 \end{equation*}
So, putting all parts together, we arrive at
\begin{equation*}
\begin{aligned}
\Sigma_{1,2}+\Sigma_2+\Sigma_3=&
\sum_{r=0}^{n-2}\frac{(r+\al +2)!}{(r+1)!(\al+2)!}
\frac{(r+\al +3)!}{r!(\al+3)!}\cdot\\
&\big\lbrack -r+(r+\al+3)-(\al +3)\big\rbrack L_r^{\al+3}(x)=0.
\end{aligned}
\end{equation*}
This concludes the proof of Proposition \ref{prop2.3}.$\hspace{7cm}\square$
\end{Proof}\\

\noindent\textbf{Proof of Theorem 2.1.}\; Inserting the Sobolev-Laguerre polynomials (\ref{eq1.2}) into the known Sobolev-Laguerre equation (\ref{eq1.4}) and comparing the terms with equal powers of the parameters $M$ and $N$, one obtains a system of identities including 
 \begin{equation}
\big\lbrack \mathcal{B}_x^\al +\la_n^{\al,B}\big\rbrack U_n^\al(x)=0,\;
\big\lbrack \mathcal{C}_x^\al +\la_n^{\al,C}\big\rbrack V_n^\al(x)=0,\;n \ge 2.
 \label{eq2.23}
  \end{equation}
So in view of Proposition \ref{prop2.3}, it follows that
 \begin{equation}
\big\lbrack \mathcal{B}_x^\al -\widetilde{\mathcal{L}}_{2\al +8,x}^\al \big\rbrack U_n^\al(x)=0,\;
\big\lbrack \mathcal{C}_x^\al -\widehat{\mathcal{L}}_{4\al +10,x}^\al\big\rbrack V_n^\al(x)=0,\;n \ge 2.
 \label{eq2.24}
  \end{equation}
Moreover, by (\ref{eq1.9}, (\ref{eq1.11}) and the definition (\ref{eq2.3})--(\ref{eq2.5}), the two identities (\ref{eq2.24}) hold as well, when the functions $U_n^\al(x)$ and $V_n^\al(x)$ are replaced by any linear function. This, however, implies that the differential operators $\mathcal{B}_x^\al$  and $\widetilde{\mathcal{L}}_{2\al +8,x}^\al$ as well as $\mathcal{C}_x^\al$ and $\widehat{\mathcal{L}}_{4\al +10,x}^\al$  coincide on the set of all algebraic polynomials $\mathbb{P}$. In fact, it just remains to show that for any $p_n(x) \in \mathbb{P}$, there are constants $c_k=c_k(\al,n)$ and $d_k=d_k(\al,n),\,0 \le k \le n$, such that 
 \begin{equation}
p_n(x)=c_0+c_1x+\sum_{k=2}^{n}c_kU_k^\al(x)=d_0+d_1x+\sum_{k=2}^{n}d_kV_k^\al(x).
 \label{eq2.25}
  \end{equation}
Concerning the first identity, we start off from the Laguerre expansion 
  \begin{equation*}
 p_n(x)=\sum_{k=0}^{n}h_k^\al(p_n,L_k^\al)_{w(\al)}L_k^\al(x),\;h_k^\al =(L_k^\al,L_k^\al)_{w(\al)}^{-1}\,.
    \end{equation*}
Next we observe that the polynomials $U_k^\al(x),\,k \ge 2$, satisfy, cf. \cite[(1),(2)]{KKB},
\begin{equation}
\begin{aligned}
&\frac{(\al +1)(\al +3)(k-2)!}{(\al +3)_{k-2}}U_k^\al (x)\\
&=\big\lbrack k(\al +2)-(\al +1)\big\rbrack L_k^\al (x)+(\al +2)(\al +3)
\bigg\lbrack D_xL_k^\al(x)+\frac{1}{k-1}D_x^2L_k^\al(x) \bigg\rbrack\\
&=\big\lbrack k(\al+2)-(\al+1)\big\rbrack L_k^\al (x)-\frac{(\al +2)(\al +3)}{k-1}\sum_{j=1}^{k-1}j\,L_j^\al(x),
 \end{aligned}
  \label{eq2.26}
\end{equation}
because
\begin{equation*}
\begin{aligned}
&D_xL_k^\al(x)+\frac{1}{k-1}D_x^2L_k^\al(x) =-L_{k-1}^{\al+1}(x)+\frac{1}{k-1}L_{k-2}^{\al+2}(x)\\
&=-\sum_{j=0}^{k-1}L_j^\al(x)+\frac{1}{k-1}\sum_{j=0}^{k-2}(k-1-j)L_j^\al(x)
=-\frac{1}{k-1}\sum_{j=1}^{k-1}j\,L_j^\al(x).
 \end{aligned}
 \end{equation*}
By inverting formula (\ref{eq2.26}), each Laguerre polynomial $L_k^\al(x),\,k \ge 2$, can be expressed as a linear combination of a linear function and the polynomials $U_j^\al(x),\,2 \le j \le k$. This implies the first identity in (\ref{eq2.25}). The second one is verified even simpler. Indeed, setting $p_n(x)=p_n(0)+p'_n(0)x+x^2q_{n-2}(x)$ with some $q_{n-2}(x) \in \mathbb{P}_{n-2}$, we note that 
\begin{equation*}
x^2q_{n-2}(x)=x^2\sum_{k=2}^{n}h_{k-2}^{\al+4}(q_{n-2},L_{k-2}^{\al+4})_{w(\al+4)} L_{k-2}^{\al+4}(x)=:\sum_{k=2}^{n}d_k(\al ,n)V_k^\al(x).
 \end{equation*}
So, together with the first two, all components of the Sobolev-Laguerre differential equation in Theorem \ref{thm2.1} coincide with those in the  representation (\ref{eq1.4}).\hfill $\square$ 

 \section{Symmetry of the Sobolev-Laguerre differential operator and orthogonality of the corresponding eigenfunctions}
 \label{sec:3}

One great advantage to be gained from the new representation of the Sobolev-Laguerre differential operator $\mathcal{L}_x^{\al,M,N}$  is that it gives rise to the following symmetry property.
 \begin{theorem}
 \label{thm3.1}
 Let $\al \in \mathbb{N}_0$  and $M \ge 0,\,N>0$.\\ 
 \textbf{(i)} The operator $\mathcal{L}_x^{\al,M,N}$ is symmetric with respect to the inner product (\ref{eq1.1}), i.e.
\begin{equation}
\big(\mathcal{L}_x^{\al,M,N}f,g\big)_{w(\al,M,N)} =\big(f,\mathcal{L}_x^{\al,M,N}g\big)_{w(\al,M,N)},\;f,g \in C^{(4\al+10)}[0,\infty).
  \label{eq3.1}
\end{equation}
\textbf{(ii)} The polynomial eigenfunctions of the Sobolev-Laguerre equation (\ref{eq2.2}), $y_n(x)=L_n^{\al,M,N}(x)$, $n \in \mathbb{N}_0$, satisfy the orthogonality relation 
\begin{equation}
\big(y_n,y_m\big)_{w(\al,M,N)}=0,\;n \ne m,\,n,m \in \mathbb{N}_0.
  \label{eq3.2}
\end{equation}
 \end{theorem}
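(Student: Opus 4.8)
\textbf{Proof plan for Theorem \ref{thm3.1}.}
The plan is to reduce the symmetry statement (i) to the symmetry of each of the four building blocks of $\mathcal{L}_x^{\al,M,N}$ against the three pieces of the inner product $(\cdot,\cdot)_{w(\al,M,N)}$, and then to deduce (ii) as a routine consequence of (i) together with the spectral equation (\ref{eq2.2}). By bilinearity it suffices to prove, for every pair $f,g\in C^{(4\al+10)}[0,\infty)$ (with enough decay for the integral to make sense, which holds in particular for polynomials — the only case needed for (ii)), the four identities obtained by matching powers of $M$ and $N$. The $M^0N^0$ identity is the classical self-adjointness of the Laguerre operator $\mathcal{L}_x^\al$ with respect to the weighted integral alone, which follows at once from the Sturm--Liouville form $\mathcal{L}_x^\al y = e^x x^{-\al}D_x[e^{-x}x^{\al+1}D_x y]$ in (\ref{eq1.6}) by one integration by parts, the boundary term $e^{-x}x^{\al+1}(f'g-fg')\big|_0^\infty$ vanishing because $\al+1\ge 1$.

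The heart of the argument is a single structural lemma: each of the elementary components $\breve{\mathcal{L}}_{2\al+4,x}^\al$, $\mathcal{E}_x^\al$, $\mathcal{F}_x^\al$, $\mathcal{G}_x^\al$, and $\mathcal{H}_x^{\al,j}$ has the "sandwiched" shape $e^x x^{-\rho}\,D_x^{m}\{e^{-x}\omega(x)\,D_x^{m}[x^{\rho}\,y(x)]\}$ for suitable nonnegative integers $m,\rho$ and a polynomial weight $\omega(x)\in\{1,\ x^2,\ x+2\al+4,\ x^j(x+q_j^\al),\dots\}$. For any such operator $\mathcal{K}$ one checks directly that
\begin{equation*}
\frac{1}{\Ga(\al+1)}\int_0^\infty (\mathcal{K}f)(x)\,g(x)\,e^{-x}x^{\al}\,dx
=\frac{(-1)^{m}}{\Ga(\al+1)}\int_0^\infty \omega(x)\,D_x^m[x^{\rho}f]\,D_x^m[x^{\rho}g]\;e^{-x}x^{\al-\rho}\,dx
\end{equation*}
by $m$-fold integration by parts, provided the power of $x$ multiplying $e^{-x}$ in every boundary term, namely $x^{\al-\rho}\omega(x)$ differentiated fewer than $2m$ times, still vanishes at $x=0$ and the exponential kills the upper endpoint. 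Since the right-hand side is manifestly symmetric in $f,g$, each such $\mathcal{K}$ is symmetric with respect to the pure integral part of the inner product. The point masses $M f(0)g(0)+N f'(0)g'(0)$ are then handled separately: one must show that the $M$-coefficient of $\mathcal{L}_x^{\al,M,N}$ contributes, through the integral, exactly the term that balances $M[(\mathcal{L}_x^\al f)(0)g(0)-f(0)(\mathcal{L}_x^\al g)(0)]$-type discrepancies coming from the boundary at $0$, and likewise for the $N$-coefficient against the $f'(0)g'(0)$ mass — this is precisely why the factors $x$, $x^{\al+1}$, $x^{\al}$ and the specific constants $q_j^\al$ appear as they do.

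The main obstacle is the bookkeeping of the boundary terms at $x=0$. Unlike the bare Laguerre case, the weights $\omega$ and the outer factors $x^{-\rho}$ do not all produce vanishing boundary contributions on their own; the surviving pieces must cancel across the different components and against the point-mass terms. Concretely, after integrating by parts, one collects all non-vanishing boundary contributions of $\mathcal{L}_x^{\al,M,N}$ evaluated on $(f,g)$ versus $(g,f)$; these are finite linear combinations of $f(0),f'(0),g(0),g'(0)$ with coefficients that are polynomials in $\al,M,N$, and the claim is that the antisymmetric part of this combination is identically zero. The cleanest route is to exploit property (\ref{eq1.11}) and the normalizations (\ref{eq1.10}) already verified for the new representation, together with the observation used in the proof of Theorem \ref{thm2.1} that $\mathcal{B}_x^\al$ and $\mathcal{C}_x^\al$ (hence $\widetilde{\mathcal{L}}^\al,\widehat{\mathcal{L}}^\al$) annihilate constants and, suitably interpreted, low-degree monomials; this forces the low-order coefficient functions $b_1^\al,b_2^\al,c_1^\al,c_2^\al$ (and the corresponding ones for $\breve{\mathcal{L}}$) to have the vanishing behaviour at $0$ needed for the boundary terms to collapse. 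Once (i) is established, (ii) is immediate: for $n\ne m$, applying (i) with $f=y_n$, $g=y_m$ and using $\mathcal{L}_x^{\al,M,N}y_k=-\la_k^{\al,M,N}y_k$ gives $(\la_m^{\al,M,N}-\la_n^{\al,M,N})(y_n,y_m)_{w(\al,M,N)}=0$, and since the eigenvalues (\ref{eq2.1}) are strictly increasing in $n$ for $M\ge 0,N>0$ — each of $\la_n^\al,\la_n^{\al,A},\la_n^{\al,B},\la_n^{\al,C}$ being nonnegative and $\la_n^\al=n$ strictly increasing — the factor $\la_m^{\al,M,N}-\la_n^{\al,M,N}$ is nonzero, whence $(y_n,y_m)_{w(\al,M,N)}=0$.
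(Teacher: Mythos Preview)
Your overall architecture matches the paper's: integrate by parts to turn each sandwich operator into a symmetric bilinear form plus boundary terms at $x=0$, then check that the boundary debris cancels against the point-mass contributions. Part (ii) follows from (i) exactly as you say. However, your plan has two real gaps in the treatment of the boundary terms.

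First, your integration-by-parts formula is miswritten: from $\mathcal{K}y=c\,e^x x^{p}D_x^m\{e^{-x}\omega\,D_x^m[x^{q}y]\}$ one gets, against $e^{-x}x^{\al}g$, the integrand $e^{-x}\omega\,D_x^m[x^{q}f]\,D_x^m[x^{\al+p}g]$, and symmetry comes from the structural match $\al+p=q$ (which holds for every component here), not from an extra factor $x^{\al-\rho}$. More importantly, the boundary terms do \emph{not} vanish individually; for instance $(\breve{\mathcal{L}}^{\al}_{2\al+4,x}f,g)_{w(\al)}=-I_2^\al(f,g)-(\al+1)f'(0)g(0)$ and $(\widetilde{\mathcal{L}}^{\al}_{2\al+8,x}f,g)_{w(\al)}=-\sum I_{3,j}^\al(f,g)-(\al+2)f''(0)g'(0)$, with similarly explicit residues for $\widehat{\mathcal{L}}$ involving the finite sums $(S_1f)(0),(S_2f)(0)$. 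These are compensated not by properties (\ref{eq1.10}) or (\ref{eq1.11}) --- which concern the leading coefficient and the action on $e^x$, respectively --- but by the \emph{values at $0$ of each component applied to $f$}: one needs $(\mathcal{L}_x^\al f)(0)=(\al+1)f'(0)$, $(\widetilde{\mathcal{L}}^{\al}f)(0)=(S_1f)(0)$, $(\breve{\mathcal{L}}^{\al}f)'(0)=-f'(0)+(S_2f)(0)$, etc. The annihilation of linear functions tells you $\mathcal{K}1=\mathcal{K}x=0$, not $(\mathcal{K}f)(0)=0$ or $(\mathcal{K}f)'(0)=0$ for general $f$, so your proposed tool does not do the job.

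Second, matching powers of $M$ and $N$ yields not four but eight identities, since $M(\mathcal{L}_x^{\al,M,N}f)(0)g(0)+N(\mathcal{L}_x^{\al,M,N}f)'(0)g'(0)$ produces terms in $M^2,\,N^2,\,M^2N,\,MN^2$ as well. These higher powers must be symmetric on their own, which forces $(\breve{\mathcal{L}}^{\al}f)(0)=0$, $(\widetilde{\mathcal{L}}^{\al}f)'(0)=0$, $(\widehat{\mathcal{L}}^{\al}f)(0)=0$, and $(\widehat{\mathcal{L}}^{\al}f)'(0)=0$. The first and third are trivial (the outer factor $x$), but $(\widetilde{\mathcal{L}}^{\al}f)'(0)=0$ requires a genuine cancellation among $\mathcal{E},\mathcal{F},\mathcal{G}$, and $(\widehat{\mathcal{L}}^{\al}f)'(0)=0$ reduces to a nontrivial terminating ${}_3F_2$ identity (a Thomae/Chu--Vandermonde transformation). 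This combinatorial step is the actual crux of the proof and is absent from your plan; without it the $MN^2$ term $(\widehat{\mathcal{L}}^{\al}f)'(0)g'(0)$ remains unaccounted for.
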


The proof requires some preliminary calculations. 
 \begin{proposition}
 \label{prop3.2}
  \textbf{(i)} For $\al \in \mathbb{N}_0$  and $f,g \in 
  C^{(4\al +10)}[0,\infty)$ we define the following integrals each being symmetric in the two functions $f,g$,
 \begin{equation*}
 \begin{aligned}
 I_1^\al(f,g)=&\frac{1}{\al!} \int_{0}^{\infty}e^{-x}x^{\al +1}f'(x)g'(x)dx,\\
 I_2^\al(f,g)=&\frac{1}{\al!(\al +2)!} \int_{0}^{\infty}e^{-x}D_x^{\al +2} \big\lbrack x^{\al +1}f(x)\big\rbrack D_x^{\al +2}\big\lbrack x^{\al +1}g(x)\big\rbrack dx,\\
   I_{3,1}^\al(f,g)=&\frac{\al +2}{(\al+1)!(\al +4)!} \int_{0}^{\infty}e^{-x}
  x^2D_x^{\al +4} \big\lbrack x^{\al +1}f(x)\big\rbrack D_x^{\al +4}\big\lbrack x^{\al +1}g(x)\big\rbrack dx,\\ 
  I_{3,2}^\al(f,g)=&\frac{1}{(\al+1)!(\al +3)!} \int_{0}^{\infty}e^{-x}
  (x+2\al +4) D_x^{\al +3} \big\lbrack x^{\al +1}f(x)\big\rbrack D_x^{\al +3}\big\lbrack x^{\al +1}g(x)\big\rbrack dx,\\ 
   I_{3,3}^\al(f,g)=&\frac{1}{\al!(\al +2)!} \int_{0}^{\infty}e^{-x}
   (x+2\al +4) D_x^{\al +2} \big\lbrack x^\al f(x)\big\rbrack D_x^{\al +2}\big\lbrack x^\al g(x)\big\rbrack dx, 
 \end{aligned}
  \end{equation*}
 and, for $0 \le j \le \al+2$ and $q_j^\al=(\al+2)^{-1}(\al +2-j)(\al +3+j)$ as defined in (\ref{eq2.6}),
  \begin{equation*}
     I_{4,j}^\al(f,g)=\frac{1}{(\al+1)!(\al +3+j)!} \int_{0}^{\infty}e^{-x}
    x^j(x+q_j^\al) D_x^{j+\al +3} \big\lbrack x^{\al+1} f(x)\big\rbrack D_x^{j+\al +3}\big\lbrack x^{\al +1} g(x)\big\rbrack dx. 
   \end{equation*}	 
 Moreover we set
\begin{equation*}
 \begin{aligned}
 &(S_1f)(0)=\sum_{j=0}^{\al +2}\frac{(-1)^j(\al +3-j)_{2j}(\al +2+j)}{j!(j+2)!}f^{(j+2)}(0),\\
 &(S_2f)(0)=\sum_{j=1}^{\al +2}\frac{(-1)^{j+1}(\al +3-j)_{2j}}{j!(j+1)!}f^{(j+1)}(0).
  \end{aligned}
 \end{equation*}	 
Recalling the definition (\ref{eq1.14}) of $\mathcal{L}_x^{\al,M,N}$, the corresponding differential expressions satisfy 
 \begin{equation}
\big(\mathcal{L}_x^\al f,g\big)_{w(\al)}=
-I_1^\al (f,g),\hspace{8,2cm}
  \label{eq3.3}
\end{equation}
  \begin{equation}
 \big(\breve{\mathcal{L}}_{2\al+4,x}^\al f,g\big)_{w(\al)}=
 -I_2^\al (f,g)-(\al+1)f'(0)g(0),\hspace{5cm}
   \label{eq3.4}
 \end{equation}
  \begin{equation}
 \big(\widetilde{\mathcal{L}}_{2\al+8,x}^\al f,g\big)_{w(\al)}=
 -\sum_{j=1}^{3}
 I_{3,j}^\al (f,g)-(\al+2)f''(0)g'(0),\hspace{4cm}
   \label{eq3.5}
 \end{equation}
   \begin{equation}
  \big(\widehat{\mathcal{L}}_{4\al+10,x}^\al  f,g\big)_{w(\al)}=
  -\sum_{j=0}^{\al +2} \frac{(\al +3-j)_{2j}}{j!(j+1)!}I_{4,j}^\al (f,g)-(S_1f)(0)g(0)-(S_2f)(0)g'(0).
     \label{eq3.6}
  \end{equation}
   \textbf{(ii)} At the origin, the differential expressions and their derivatives take the values
   \begin{equation}
  \big(\mathcal{L}_x^\al f\big)(0)=(\al +1)f'(0),\hspace{1cm}
  \big(\mathcal{L}_x^\al f\big)'(0)=(\al +2)f''(0)-f'(0),\hspace{1cm}
    \label{eq3.7}
  \end{equation}
    \begin{equation}
   \big(\breve{\mathcal{L}}_{2\al+4,x}^\al f\big)(0)=0,\hspace{2,1cm}
    \big(\breve{\mathcal{L}}_{2\al+4,x}^\al f\big)'(0)=-f'(0)+ \big(S_2 f\big)(0),\hspace{0,2cm}
     \label{eq3.8}
   \end{equation}
    \begin{equation}
   \big(\widetilde{\mathcal{L}}_{2\al+8,x}^\al f\big)(0)=\big(S_1 f\big)(0),\hspace{0,8cm}
   \big(\widetilde{\mathcal{L}}_{2\al+8,x}^\al f\big)'(0)=0,\hspace{3,2cm}
     \label{eq3.9}
   \end{equation}
     \begin{equation}
    \big(\widehat{\mathcal{L}}_{4\al+10,x}^\al  f\big)(0)=0,\hspace{2cm}
    \big(\widehat{\mathcal{L}}_{4\al+10,x}^\al  f\big)'(0)=0.\hspace{3cm}
       \label{eq3.10}
    \end{equation}
    \end{proposition}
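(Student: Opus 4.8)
The proof of Proposition \ref{prop3.2} naturally splits into the two parts (i) and (ii), and each of the identities (\ref{eq3.3})--(\ref{eq3.10}) is handled by repeated integration by parts. The plan is as follows.

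\textbf{Part (i).} Each differential expression on the left of (\ref{eq3.3})--(\ref{eq3.6}) is built out of elementary pieces of the form $e^x x^a D_x^m\{e^{-x}x^b(x+c)D_x^m[x^{\al+1}f(x)]\}$ (or with $x^\al$ in place of $x^{\al+1}$, and with or without the factor $x+c$). The key observation is that, after multiplying by the Laguerre weight $\frac{1}{\al!}e^{-x}x^\al$ and integrating against $g$ over $[0,\infty)$, the factor $e^x$ cancels the weight's exponential and $x^a$ combines with $x^\al$, so one is left with $\frac{1}{\al!}\int_0^\infty g(x)\,x^{\al+a}D_x^m\{e^{-x}x^b(x+c)D_x^m[x^{\al+1}f(x)]\}\,dx$. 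One then integrates by parts $m$ times to move the outer $D_x^m$ onto $g(x)x^{\al+a}$; since $a$ is chosen exactly so that $x^{\al+a}$ has the right exponent, $D_x^m[x^{\al+a}g(x)]$ reproduces a factor $x^{\al+1}$ (or $x^\al$) times a derivative expression symmetric in form to the one acting on $f$. Concretely, for (\ref{eq3.3}) one uses the self-adjoint (Sturm--Liouville) form in (\ref{eq1.6}); for (\ref{eq3.4}), one integrates by parts $\al+2$ times in $\frac{(-1)^{\al+1}}{\al!(\al+2)!}\int_0^\infty g(x)\,x\cdot e^x x^\al\,e^{-x}\,e^x D_x^{\al+2}\{\cdots\}$ — wait, more precisely one uses (\ref{eq1.8}) directly, peeling off the outer $D_x^{\al+2}$ onto $x^{\al+1}g(x)$ (whose exponent is again $\al+1$, matching the inner bracket) to land on $I_2^\al(f,g)$. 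For (\ref{eq3.5}) one does this separately for $\mathcal{E}_x^\al,\mathcal{F}_x^\al,\mathcal{G}_x^\al$, producing $I_{3,1}^\al,I_{3,2}^\al,I_{3,3}^\al$ respectively (the constant $\frac{(-1)^\al}{(\al+1)(\al+4)!}$ in (\ref{eq2.3}) combines with the factorials picked up from integration by parts to give the normalizations displayed in the $I_{3,j}^\al$); and for (\ref{eq3.6}) one does it termwise over $j=0,\dots,\al+2$ in (\ref{eq2.5}), producing $\sum_j \frac{(\al+3-j)_{2j}}{j!(j+1)!}I_{4,j}^\al(f,g)$. The symmetry in $f,g$ of each $I$ is then manifest from its integral form.

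\textbf{The boundary terms.} The main obstacle — and the part requiring genuine care rather than routine bookkeeping — is the collection of boundary contributions at $x=0$ that arise at each integration by parts. In each step one picks up a term $\big[\,(\text{something})\cdot D_x^{m-1-\ell}\{e^{-x}x^b(x+c)D_x^m[x^{\al+1}f]\}\big]_0^\infty$. At $\infty$ these vanish because of the $e^{-x}$ factor. At $0$, many vanish because of the factor $x^b$ (when $b\ge 1$) or because $D_x^k$ applied to $x^{\al+a}g(x)$ still carries a positive power of $x$ for small $k$; but the surviving ones must be evaluated explicitly. These surviving boundary terms are exactly what produce the point-mass-type contributions: $-(\al+1)f'(0)g(0)$ in (\ref{eq3.4}), $-(\al+2)f''(0)g'(0)$ in (\ref{eq3.5}), and $-(S_1f)(0)g(0)-(S_2f)(0)g'(0)$ in (\ref{eq3.6}). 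For (\ref{eq3.6}) one must track, for each $j$, the boundary term coming from the innermost bracket $D_x^{\al+3+j}[x^{\al+1}f(x)]$ evaluated near $0$ after the outer derivatives have been integrated by parts down; using $D_x^k[x^{\al+1}f(x)]\big|_{x=0}$ is nonzero only for $k\ge \al+1$, one finds each $j$ contributes a term proportional to $f^{(j+1)}(0)$ and one proportional to $f^{(j+2)}(0)$ (the latter from the extra $x$ in $x+q_j^\al$), and summing over $j$ with the weights $\frac{(\al+3-j)_{2j}}{j!(j+1)!}$ and the constant from $\mathcal{H}_x^{\al,j}$ gives precisely $(S_2f)(0)g'(0)$ and $(S_1f)(0)g(0)$ after matching the definitions of $S_1,S_2$. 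Careful choice of the order in which the two $D_x^{\al+3+j}$ blocks are integrated by parts (peel the outer block fully first, then the inner) keeps the bookkeeping manageable.

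\textbf{Part (ii).} The values (\ref{eq3.7})--(\ref{eq3.10}) are obtained by direct Taylor expansion at $x=0$. For (\ref{eq3.7}) this is immediate from $\mathcal{L}_x^\al y=xy''+(\al+1-x)y'$. For (\ref{eq3.8})--(\ref{eq3.10}), one uses the product forms (\ref{eq1.8}),(\ref{eq2.3})--(\ref{eq2.4}),(\ref{eq2.5})--(\ref{eq2.6}): write $x^{\al+1}f(x)=\sum_{k\ge 0}\frac{f^{(k)}(0)}{k!}x^{\al+1+k}$, apply the inner $D_x^{\al+3+j}$ (which kills all terms with exponent $<\al+3+j$, i.e. $k<j+2$), multiply by $e^{-x}x^j(x+q_j^\al)$, apply the outer $D_x^{\al+3+j}$, multiply by $e^x x$, and read off the constant and linear coefficients in $x$. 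The factor $x$ out front in $\breve{\mathcal{L}},\mathcal{E},\mathcal{F},\mathcal{H}$ forces the value at $0$ to be $0$ (giving the left identities in (\ref{eq3.8}),(\ref{eq3.10}) and the $\mathcal{E},\mathcal{F}$ parts of the claims), so only $\mathcal{G}_x^\al$ — which has no prefactor $x$ — contributes to $(\widetilde{\mathcal{L}}_{2\al+8,x}^\al f)(0)$, and a short computation identifies it with $(S_1f)(0)$; similarly the $D_x$ of the $x$-prefactored pieces, evaluated at $0$, picks out one coefficient, yielding $(S_2f)(0)$ in (\ref{eq3.8}) and $0$ in (\ref{eq3.9})--(\ref{eq3.10}) once the various terms cancel. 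The arithmetic here is routine; the only thing to watch is consistency of the normalizing constants with those fixed in Part (i), which is why it is efficient to prove (ii) using the same expansions that appeared in the boundary-term analysis of (i).
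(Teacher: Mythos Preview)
Your overall strategy---integration by parts to produce the symmetric integrals $I$ plus boundary terms, and Taylor expansion at $x=0$ for Part~(ii)---is exactly the paper's approach. However, there are two places where you have underestimated the work, and one of them is a genuine gap.

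First, the boundary-term cancellation in (\ref{eq3.5}) is not as automatic as you suggest. When you integrate by parts in $\mathcal{E}_x^\al,\mathcal{F}_x^\al,\mathcal{G}_x^\al$ separately, the surviving boundary contributions at $x=0$ involve not only $f''(0)g'(0)$ but also terms in $f''(0)g(0)$ and $f^{(3)}(0)g(0)$; these last two must cancel \emph{across} the three pieces, and this relies on the specific constants $(\al+2),(\al+4),(\al+1)(\al+3)(\al+4)$ and the shift $x+2\al+4$ in (\ref{eq2.4}). You should display these boundary terms and check the cancellation explicitly rather than asserting it.

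Second, and more seriously, your claim that ``the arithmetic here is routine'' for $(\widehat{\mathcal{L}}_{4\al+10,x}^\al f)'(0)=0$ in (\ref{eq3.10}) is not correct. After expanding as you describe, one is left with showing that for every $0\le k\le 2\al+4$ the coefficient of $f^{(k+2)}(0)$ vanishes; this coefficient is a sum over $j$ involving $\frac{(\al+3-j)_{2j}}{j!(j+1)!}$ together with the factor $q_j^\al$ from (\ref{eq2.6}), and it does \emph{not} collapse by elementary manipulation. In the paper this identity is established via two applications of the Chu--Vandermonde summation (equivalently, a terminating ${}_3F_2$ transformation of Thomae type). You need either to invoke such a hypergeometric identity explicitly or to supply an independent combinatorial proof; as written, this step is missing. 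A related but smaller point: for $(\widetilde{\mathcal{L}}_{2\al+8,x}^\al f)'(0)=0$ in (\ref{eq3.9}) there is also a three-term cancellation among $(\mathcal{E}_x^\al f)'(0),(\mathcal{F}_x^\al f)'(0),(\mathcal{G}_x^\al f)'(0)$ that should be verified, not asserted.

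Finally, a minor inaccuracy in your boundary-term analysis for (\ref{eq3.6}): the $g(0)$-boundary term from $\mathcal{H}_x^{\al,j}$ carries contributions in $f^{(j+2)}(0)$ and $f^{(j+3)}(0)$ (not $f^{(j+1)}(0)$), and obtaining $(S_1f)(0)$ requires an index shift to combine them; the $g'(0)$-term carries $f^{(j+2)}(0)$ and yields $(S_2f)(0)$ after a similar shift.
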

  \begin{Proof} We repeatedly utilize that, for any sufficiently differentiable function $f(x)$,
   \begin{equation*}
   D_x^s[x^tf(x)]\, \vert_{x=0}=\frac{s!}{(s-t)!}f^{(s-t)}(0),\;s,t \in \mathbb{N}_0,\,s \ge t \ge 0.
        \end{equation*}
 If, in addition, $q,r \in \mathbb{N}_0,\,q \ge r \ge 0$, we have
  \begin{equation}
   \begin{aligned}
   \frac{1}{q!}&D_x^q\Big\lbrace e^{-x}x^rD_x^s[x^tf(x)]\Big\rbrace \, \big\vert_{x=0}=
   \sum_{k=0}^{q}\frac{(-1)^{q-k}}{k!(q-k)!}D_x^k\Big\lbrace x^rD_x^s[x^tf(x)]\Big\rbrace \, \big\vert_{x=0}\\
   &=\sum_{k=0}^{q}\frac{(-1)^{q-k}}{(q-k)!}\sum_{j=0}^{k}\frac{1}{j!(k-j)!}
   D_x^{k-j} \big[x^r\big]\,D_x^{s+j}\big[x^tf(x)\big] \, \big\vert_{x=0}\\
    &=\sum_{k=r}^{q}\frac{(-1)^{q-k}}{(q-k)!}\frac{1}{(k-r)!}\frac{(k+s-r)!}
    {(k+s-r-t)!}f^{(k+s-r-t)}(0)\\
   &=\sum_{k=0}^{q-r}\frac{(-1)^{q-r-k}}{(q-r-k)!\,k!}\frac{(k+s)!}
      {(k+s-t)!}f^{(k+s-t)}(0).  
            \label{eq3.11}
    \end{aligned} 
      \end{equation}
   \textbf{(i)} The identities (\ref{eq3.3}) and (\ref{eq3.4}) have been proved in \cite[Sec.4]{Ma1} via integration by parts, i.e.,
    \begin{equation*}
     \begin{aligned}
    \big(\mathcal{L}_x^\al f,g\big)_{w(\al)}&=\frac{1}{\al!} \int_{0}^{\infty}D_x
   \big\lbrack e^{-x}x^{\al +1}D_xf(x)\big\rbrack\, g(x)dx
   =-\frac{1}{\al!} \int_{0}^{\infty}e^{-x}x^{\al +1}f'(x)g'(x)dx,\\
        \big(\breve{\mathcal{L}}_{2\al+4,x}^\al f,g\big)_{w(\al)}&=
     \frac{(-1)^{\al +1}}{\al!(\al +2)!} \int_{0}^{\infty}D_x^{\al +2}
      \big\lbrace e^{-x}D_x^{\al +2} \big\lbrack x^{\al +1}f(x)\big\rbrack \big\rbrace \,x^{\al +1} g(x)dx  \\
   =-(\al +1)&f'(0)g(0)-\frac{1}{\al!(\al +2)!} \int_{0}^{\infty}
         e^{-x}D_x^{\al +2} \big\lbrack x^{\al +1}f(x)\big\rbrack \,D_x^{\al +2} \big\lbrack x^{\al +1}g(x)\big\rbrack dx. 
         \end{aligned} 
        \end{equation*}
  Concerning (\ref{eq3.5}), we find, in view of (\ref{eq2.3}), (\ref{eq2.4}), that
   \begin{equation*}
  \big(\widetilde{\mathcal{L}}_{2\al+8,x}^\al f,g\big)_{w(\al)}=\Omega_1+\Omega_2+\Omega_3
       \end{equation*}
   with
      \begin{equation*}
       \begin{aligned}
      \Omega_1&=\frac{(\al+2)(-1)^{\al+1}}{(\al+1)!(\al+4)!} \int_{0}^{\infty}
      D_x^{\al +4} \big\lbrace e^{-x}x^2D_x^{\al +4} \big\lbrack x^{\al +1}f(x)\big\rbrack \big\rbrace \,x^{\al +1} g(x)dx  \\
     &=\sum_{k=0}^{\al +3}\frac{(\al +2)(-1)^{k+\al +1}}{(\al+1)!(\al+4)!} 
    D_x^{\al +3-k} \big\lbrace e^{-x}x^2 D_x^{\al +4} \big\lbrack x^{\al +1}f(x)\big\rbrack \big\rbrace\, D_x^k \big\lbrack x^{\al +1}g(x)\big\rbrack \big\vert _{x=0}^{\infty}-I_{3,1}^\al (f,g).
           \end{aligned} 
          \end{equation*}
 In the sum, the only non-vanishing part occurs for $k=\al +1$, evaluated at $x=0$, i. e., 
  \begin{equation}
  -\frac{\al +2}{(\al+1)!(\al+4)!}  D_x^2 \big\lbrace e^{-x}x^2 D_x^{\al +4} \big\lbrack x^{\al +1}f(x)\big\rbrack \big\rbrace\, D_x^{\al +1} \big\lbrack x^{\al +1}g(x) \big\rbrack \big\vert _{x=0}=-\frac{\al +2}{3}f^{(3)}(0)g(0).
   \label{eq3.12}
   \end{equation}
   Furthermore,
     \begin{equation*}
    \begin{aligned}
   \Omega_2&=\frac{(-1)^\al}{(\al+1)!(\al+3)!} \int_{0}^{\infty}
   D_x^{\al +3} \big\lbrace e^{-x}(x+2\al +4) D_x^{\al +3} \big\lbrack x^{\al +1}f(x)\big\rbrack \big\rbrace \,x^{\al +1} g(x)dx  \\
   &=\sum_{k=0}^{\al+2}\frac{(-1)^{k+\al}}{(\al+1)!(\al+3)!} 
   D_x^{\al +2-k} \big\lbrace e^{-x}(x+2\al +4) D_x^{\al +3} \big\lbrack x^{\al +1}f(x)\big\rbrack \big\rbrace\, D_x^k \big\lbrack x^{\al +1}g(x)\big\rbrack \big\vert _{x=0}^{\infty}\\
   &\quad -I_{3,2}^\al (f,g).
             \end{aligned} 
            \end{equation*}
  Here, only the terms for $k=\al+1$ and $k=\al+2$, evaluated at $x=0$, contribute to the sum by 
   \begin{equation}
    \begin{aligned}
  &\frac{1}{(\al+1)!(\al+3)!} 
  D_x \big\lbrace e^{-x}(x+2\al+4) D_x^{\al+3} \big\lbrack x^{\al+1}f(x)\big\rbrack \big\rbrace\, D_x^{\al+1} \big\lbrack x^{\al+1}g(x)\big\rbrack \big\vert _{x=0}\\
  &=\frac{1}{(\al+3)!} 
    \big\lbrace (2\al+4) D_x^{\al+4} \big\lbrack x^{\al+1}f(x)\big\rbrack-
    (2\al+3) D_x^{\al+3} \big\lbrack x^{\al+1}f(x)\big\rbrack
     \big\rbrace \big\vert _{x=0}\,g(0)\\
  &=\big\lbrack \frac{1}{3}(\al +2)(\al +4) f^{(3)}(0)-\frac{1}{2}(2\al +3) f''(0)
       \big\rbrack g(0)  
      \end{aligned}
      \label{eq3.13}
     \end{equation}
   and, respectively,
    \begin{equation}
   -\frac{2\al +4}{(\al+1)!(\al+3)!} D_x^{\al+3} \big\lbrack x^{\al+1}f(x)\big\rbrack \, D_x^{\al+2} \big\lbrack x^{\al+1}g(x)\big\rbrack \big\vert _{x=0} =-(\al+2)^2 f''(0)g'(0).     
         \label{eq3.14}
        \end{equation}
  Analogously, there holds
      \begin{equation*}
       \begin{aligned}
      \Omega_3&=\frac{(-1)^\al}{\al!(\al+2)!} \int_{0}^{\infty}
      D_x^{\al +2} \big\lbrace e^{-x}(x+2\al +4) D_x^{\al +2} \big\lbrack x^\al f(x)\big\rbrack \big\rbrace \,x^\al g(x)dx  \\
      &=\sum_{k=0}^{\al +1}\frac{(-1)^{k+\al}}{\al!(\al+2)!}
      D_x^{\al +1-k} \big\lbrace e^{-x}(x+2\al +4) D_x^{\al +2} \big\lbrack x^{\al}f(x)\big\rbrack \big\rbrace\, D_x^k \big\lbrack x^{\al}g(x)\big\rbrack \big\vert _{x=0}^{\infty}-I_{3,3}^\al (f,g),
          \end{aligned} 
        \end{equation*} 
   where the sum reduces to
      \begin{equation}
     -\big\lbrack \frac{1}{3}(\al +2)(\al +3) f^{(3)}(0)-\frac{1}{2}(2\al +3) f''(0)
     \big\rbrack g(0)+(\al +1)(\al +2)f''(0)g'(0).     
           \label{eq3.15}
          \end{equation}
   Hence, the terms (\ref{eq3.12})--(\ref{eq3.15}) add up to $-(\al +2)f''(0)g'(0)$ as asserted.
   
   Similarly, we obtain for any $j \in \mathbb{N}_0$, 
      \begin{equation*}
     \begin{aligned}
     &\big(\mathcal{H}_x^{\al,j}f,g\big)_{w(\al)}\\
     &=\frac{(-1)^{j+\al}}{(\al+1)!(j+\al+3)!}
     \int_{0}^{\infty}D_x^{j+\al+3} \big\lbrace e^{-x}x^j(x+q_j^\al) D_x^{j+\al+3} \big\lbrack x^{\al +1}f(x)\big\rbrack \big\rbrace x^{\al+1} g(x)dx\\
     &=\sum_{k=0}^{j+\al+2}\!\frac{(-1)^{k+j+\al}}{(\al+1)!(j+\al+3)!}
     D_x^{j+\al +2-k} \big\lbrace e^{-x}x^j(x+q_j^\al) D_x^{j+\al+3} \big\lbrack x^{\al +1} f(x)\big\rbrack \big\rbrace\, D_x^k \big\lbrack x^{\al+1} g(x)\big\rbrack \big\vert _{x=0}^{\infty}\\
     &\quad -I_{4,j}^\al (f,g).
     \end{aligned} 
     \end{equation*} 
    Again, all but the summands for $k=\al+1$ and $k=\al+2$, evaluated at $x=0$,  vanish. For $k=\al+1$, we obtain           
        \begin{equation*}
          \begin{aligned}
       &\frac{(-1)^j}{(j+\al+3)!}
       D_x^{j+1} \big\lbrace e^{-x}x^j(x+q_j^\al) D_x^{j+\al+3} \big\lbrack x^{\al +1} f(x)\big\rbrack \big\rbrace\, \big\vert _{x=0}\,g(0)\\
       &=\frac{(-1)^j}{j+2}(1-q_j^\al)f^{(j+2)}(0)g(0)+\frac{(-1)^j(\al +4+j)}{(j+2)(j+3)}q_j^\al f^{(j+3)}(0)g(0)\\
       &=\frac{(-1)^{j+1}(\al+3-j)(\al+2+j)}{(j+2)(\al +2)} f^{(j+2)}(0)g(0)\\
       &\quad +\frac{(-1)^j(\al+4+j)(\al+2-j)(\al+3+j)}{(j+2)(j+3)(\al +2)} f^{(j+3)}(0)g(0)
        \end{aligned} 
        \end{equation*} 
       and, for $k=\al +2$,
      \begin{equation*}
      \begin{aligned}
      &\frac{(-1)^{j+1}}{(\al +1)!(j+\al+3)!}
     D_x^j \big\lbrace e^{-x}x^j(x+q_j^\al) D_x^{j+\al+3} \big\lbrack x^{\al +1} f(x)\big\rbrack \big\rbrace\,D_x^{\al+2} \big\lbrack x^{\al +1} g(x)\big\rbrack\,\big\vert _{x=0}\\
     &=\frac{(-1)^{j+1}(\al+2-j)(\al+3+j)}{(j+1)(j+2)} f^{(j+2)}(0)g'(0).
                  \end{aligned} 
              \end{equation*} 
     Putting all parts together then yields
      \begin{equation*}
     \begin{aligned}
     &\big(\widehat{\mathcal{L}}_{4\al+10,x}^\al f,g\big)_{w(\al)}= \sum_{j=0}^{\al+2}\frac{(\al+3-j)_{2j}}{j!(j+1)!}\big(\mathcal{H}_x^{\al,j} f,g \big)_{w(\al)} \\
     &=-\sum_{j=0}^{\al+2}\frac{(\al+3-j)_{2j}}{j!(j+1)!}I_{4,j}^\al (f,g)
     +g(0)\bigg\lbrace \sum_{j=0}^{\al+2}\frac{(-1)^j(\al+3-j)_{2j}}{j!(j+2)!}
     f^{(j+2)}(0)-\\
     &\quad-\sum_{j=0}^{\al+2}\frac{(-1)^j(\al+2-j)_{2j+2}}{j!(j+2)!(\al+2)}  f^{(j+2)}(0)+\sum_{j=0}^{\al+2}\frac{(-1)^j(\al+2-j)_{2j+2}(\al +4+j)} {j!(j+3)!(\al+2)}f^{(j+3)}(0) \bigg\rbrace \\
     &\quad-\sum_{j=0}^{\al+1}\frac{(-1)^j(\al+2-j)_{2j+2}}{(j+1)!(j+2)!}
     f^{(j+2)}(0)g'(0).
     \end{aligned} 
     \end{equation*} 
    Notice that the three sums in curly brackets can be combined by means of an index transformation in the third one and the fact that
      \begin{equation*}
     \begin{aligned}
    &\frac{(-1)^j(\al+3-j)_{2j}\big[\al+2-(\al+3+j)(\al+2-j)-(\al+3+j)\,j\big]}
   {j!(j+2)!(\al+2)}\\
   &=\frac{(-1)^{j+1}(\al+3-j)_{2j}(\al+2+j)}{j!(j+2)!}.
            \end{aligned} 
         \end{equation*} 
   By another index transformation in the last sum we arrive at the identity (\ref{eq3.6}).\\
   
    \textbf{(ii)} The values (\ref{eq3.7}) follow at once by definition (\ref{eq1.6}) of $\mathcal{L}_x^\al$. In (\ref{eq3.8}), $\big(\breve{\mathcal{L}}_{2\al+4,x}^\al f\big)(0)=0$ is trivial, while in view of (\ref{eq3.11}),
         \begin{equation*}
       \begin{aligned}
   \big(\breve{\mathcal{L}}_{2\al+4,x}^\al f\big)'(0)=& \frac{(-1)^{\al+1}}{(\al+2)!} D_x^{\al+2}\big\lbrace e^{-x}D_x^{\al+2}\big\lbrack x^{\al +1} f(x)\big\rbrack \big\rbrace\, \big\vert _{x=0}\\
   =&\sum_{k=0}^{\al+2}\frac{(-1)^{k+1}(\al+3-k)_{2k}}{k!(k+1)!} f^{(k+1)}(0)=-f'(0)+\big(S_2 f\big)(0).
    \end{aligned} 
           \end{equation*} 
    Concerning (\ref{eq3.9}), 
     \begin{equation*}
      \begin{aligned}
      \big(\widetilde{\mathcal{L}}_{2\al+8,x}^\al f\big)(0)&=\frac{(-1)^\al\big(\mathcal{G}_x^\al f\big)(0)}{(\al+1)(\al+4)!} = \frac{(-1)^\al}{(\al+2)!}
      D_x^{\al+2}\big\lbrace e^{-x}(x+2\al +4)D_x^{\al+2}\big\lbrack x^\al f(x)\big\rbrack \big\rbrace\, \big\vert _{x=0}\\
      &=\sum_{k=0}^{\al+2}\frac{(-1)^k(\al+3-k)_{2k}(2 \al +4-[\al +2-k])}{k!(k+2)!} f^{(k+2)}(0)=\big(S_1 f\big)(0).
       \end{aligned} 
              \end{equation*} 
      We further notice that
         \begin{equation*}
          \begin{aligned}
      &\big(\mathcal{E}_x^\al f\big)'(0)=\frac{\al +2}{\al +1}          \sum_{k=1}^{\al+3}\frac{(-1)^k(k+ \al +3)!}{(k-1)!(k+2)!(\al +3-k)!} f^{(k+2)}(0),\\
      & \big(\mathcal{F}_x^\al f\big)'(0)=\frac{1}{\al +1}          \sum_{k=0}^{\al+3}\frac{(-1)^{k+1}(k+ \al +1)(k+ \al+3)!}{k!(k+2)!(\al +3-k)!} f^{(k+2)}(0),\\
      &\big(\mathcal{G}_x^\al f\big)'(0)= \sum_{k=0}^{\al+2}\frac{(-1)^k}{k!(\al +2-k)!} D_x^{k+1}\big\lbrace (x+2\al +4)D_x^{\al+2}\big\lbrack x^\al f(x)\big\rbrack \big\rbrace\, \big\vert _{x=0}\\ 
      &=\sum_{k=0}^{\al+2}\frac{(-1)^k}{k!(\al+2-k)!}\bigg\lbrace (2\al+4)
      \frac{(k+\al+3)!}{(k+3)!}f^{(k+3)}(0)+(k+1) \frac{(k+\al+2)!}
      {(k+2)!} f^{(k+2)}(0)\bigg\rbrace \\
     &=\sum_{k=0}^{\al+3}\frac{(-1)^{k+1}(k+\al+2)!}{k!(k+2)!(\al+3-k)!}\big\lbrack (2\al+4)k-(k+1)(\al +3-k)\big\rbrack f^{(k+2)}(0)\\ 
     &=\sum_{k=0}^{\al+3}\frac{(-1)^{k+1}(k-1)(k+\al+3)!}{k!(k+2)!(\al+3-k)!}
     f^{(k+2)}(0).
     \end{aligned} 
           \end{equation*} 
      Since $k(\al+2)/(\al+1)-(k+\al+1)/(\al+1)-(k-1)=0$, $k \in \mathbb{N}_0$, we conclude that
       \begin{equation*}
       \big(\widetilde{\mathcal{L}}_{2\al+8,x}^\al f\big)'(0)=\frac{(-1)^\al}{(\al+1)(\al+4)!}\big\lbrace \big(\mathcal{E}_x^\al f\big)'(0)+\big(\mathcal{F}_x^\al f\big)'(0)+\big(\mathcal{G}_x^\al f\big)'(0)\big\rbrace =0.
      \end{equation*} 
      
      Concerning (\ref{eq3.10}), it is clear that $ \big(\widehat{\mathcal{L}}_{4\al+10,x}^\al f\big)(0)=0$. Moreover, 
      \begin{equation*}
      \big(\widehat{\mathcal{L}}_{4\al+10,x}^\al f\big)'(0)= \sum_{j=0}^{\al+2}\frac{(\al+3-j)_{2j}}{j!(j+1)!}\big(\mathcal{H}_x^{\al,j}  f \big)'(0) 
     \end{equation*} 
      with
      \begin{equation*}
      \begin{aligned}
     &\big(\mathcal{H}_x^{\al,j} f \big)'(0)= 
       \frac{(-1)^{\al+j}}{(\al +1)(\al+3+j)!}
      D_x^{\al+3+j}\big\lbrace e^{-x}x^j(x+q_j^\al)D_x^{\al+3+j}\big\lbrack x^{\al+1} f(x)\big\rbrack \big\rbrace\, \big\vert _{x=0}\\
     &=\sum_{k=j+1}^{\al+3+j}\frac{(-1)^{k+1}}{(\al+1)(\al+3+j-k)!}\bigg\lbrace 
      \frac{(k+\al+2)!}{(k-j-1)!(k+1)!}f^{(k+1)}(0)+\frac{q_j^\al (k+\al+3)!}
      {(k-j)!(k+2)!} f^{(k+2)}(0)\bigg\rbrace \\
     &=\sum_{k=j}^{\al+3+j}\frac{(-1)^k}{(\al+1)(\al+3+j-k)!}
          \frac{(k+\al+3)!\,(\al+3+j-k-q_j^\al)}{(k-j)!(k+2)!}f^{(k+2)}(0).
       \end{aligned} 
      \end{equation*} 
      Hence, by interchanging the order of summation,
     \begin{equation*}
      \big(\widehat{\mathcal{L}}_{4\al+10,x}^\al f\big)'(0)=  \sum_{k=0}^{2\al+4}b_k^\al\frac{(-1)^k(k+\al+3)!}{(\al+1)(k+2)!}f^{(k+2)}(0), 
          \end{equation*} 
      where
    \begin{equation*}
   \begin{aligned}
    b_k^\al=&\sum_{j=\max(0,k-\al-3)}^{\min(\al+2,k)} 
   \frac{(\al +3-j)_{2j}}{j!(j+1)!(k-j)!}
   \frac{\big\lbrack \al+3-k+j-(\al+2-j)(j+\al+3)/(\al+2)\big\rbrack}{(\al+3-k+j)!}\\
   =&\sum_{j=\max(0,k-\al-2)}^{\min(\al+2,k)} 
      \frac{(\al +3-j)_{2j}}{j!(j+1)!(k-j)!(\al+2-k+j)!}\\
    &\quad -\sum_{j=\max(0,k-\al-3)}^{\min(\al+1,k)} 
         \frac{(\al +2-j)_{2j+2}}{j!(j+1)!(k-j)!(\al+3-k+j)!(\al +2)}\,.  
         \end{aligned}
    \end{equation*} 
    To see that $\big(\widehat{\mathcal{L}}_{4\al+10,x}^\al f\big)'(0)=0$, we must show that the coefficients $b_k^\al$ vanish for all $0 \le k \le 2\al+4$ or, equivalently,
    \begin{equation}
   \sum_{j=\max(0,k-\al-2)}^{\min(\al+2,k)} 
   \frac{(j+\al +2)!(-k)_j(-\al -2)_j}{j!(j+1)!(j-k+\al +2)!}=
    \sum_{n=\max(0,k-\al-3)}^{\min(\al+1,k)} 
      \frac{(n+\al+3)!(-k)_n(-\al -1)_n}{n!(n+1)!(n-k+\al+3)!} .
    \label{eq3.16}
     \end{equation}
    By employing the Chu-Vandermonde formula (\ref{eq2.22}), we observe that for all $\max(0,k-\al-2) \le j \le \min(\al+2,k)$,
   \begin{equation}
      C_{j,k}^\al :=\sum_{n=\max(0,k-j-1)}^{\min(\al+1,k)} 
     \frac{(-k)_n(-\al -1)_n}{(n-k+j+1)!\,n!} =\frac{(j+\al +2)!}{(j+1)!(j-k+\al +2)!}\,.
        \label{eq3.17}
         \end{equation}
    Inserting (\ref{eq3.17}) into the left-hand side of (\ref{eq3.16}), interchanging the order of summation, and using (\ref{eq3.17}) again with $\al$  replaced by $\al+1$, we arrive at the required result
     \begin{equation*}
      \begin{aligned} 
   &\sum_{j=\max(0,k-\al-2)}^{\min(\al+2,k)}  C_{j,k}^\al
     \frac{(-k)_j(-\al -2)_j}{j!}\\
     &\quad =\sum_{j=\max(0,k-\al-2)}^{\min(\al+2,k)} \bigg \lbrack
      \sum_{n=\max(0,k-j-1)}^{\min(\al+1,k)}\frac{(-k)_n(-\al -1)_n}{(1-k+j+n)!\,n!}
      \bigg \rbrack \frac{(-k)_j(-\al -2)_j}{j!}\\
   \hspace {2cm} &\quad =\sum_{n=\max(0,k-\al-3)}^{\min(\al+1,k)} \bigg \lbrack
    \sum_{j=\max(0,k-n-1)}^{\min(\al+2,k)}\frac{(-k)_j(-\al -2)_j}{(1-k+j+n)!\,j!}
    \bigg \rbrack \frac{(-k)_n(-\al -1)_n}{n!}\\
     &\quad = \sum_{n=\max(0,k-\al-3)}^{\min(\al+1,k)}  C_{n,k}^{\al+1}
          \frac{(-k)_n(-\al -1)_n}{n!}\,. \hspace {6cm}\square 
          \end{aligned} 
               \end{equation*}
    \end{Proof}
   \begin{remark}
   For $0 \le k \le \al+2$, identity (\ref{eq3.16}) may be written in the form
      \begin{equation}
    {}_3F_2 \left(\begin{matrix} \al+3,-\al-2,-k\\
    2,\al+3-k \end{matrix};1\right
    )=\frac{\al +3}{\al +3-k} {}_3F_2 \left(\begin{matrix}\al+4,-\al-1,-k\\
    2,\al+4-k \end{matrix};1\right)
               \label{eq3.18}
            \end{equation}
  This is a terminating version of Thomae's  ${}_3F_2-$transformation formula \cite[(2)]{Wh} %
         \begin{equation*}
           \begin{aligned} 
       {}_3F_2 \left(\begin{matrix}a,b,c\\d,e \end{matrix};1\right)
       =\frac{\Ga(d)\Ga(e)\Ga(s)}{\Ga(a)\Ga(s+b)\Ga(s+c)}
       {}_3F_2 \left(\begin{matrix}d-a,e-a,s\\s+b,s+c \end{matrix};1\right)\,,\;s=d+e-a-b-c.
     \end{aligned} 
     \end{equation*}
  \end{remark}

\noindent\textbf{Proof of Theorem 3.1. (i)}\; In view of Proposition \ref{prop3.2} (ii), the scalar product on the left-hand side of (\ref{eq3.1}) is given by
      \begin{equation*}
             \begin{aligned} 
 &\big(\mathcal{L}_x^{\al,M,N}f,g\big)_{w(\al ,M,N)}=\big(\mathcal{L}_x^{\al,M,N}f,g\big)_{w(\al)}
 +M \big(\mathcal{L}_x^{\al,M,N}f)(0)g(0)+N \big(\mathcal{L}_x^{\al,M,N}f)'(0)g'(0)\\
 &=\big(\mathcal{L}_x^\al f,g\big)_{w(\al)}+M \big\lbrack \big(\breve{\mathcal{L}}_{2\al+4,x}^\al f,g\big)_{w(\al)}+\big(\mathcal{L}_x^\al f)(0)g(0) \big\rbrack
 +N \big\lbrack \big(\widetilde{\mathcal{L}}_{2\al+8,x}^\al f,g\big)_{w(\al)}\\
 &\quad +\big(\mathcal{L}_x^\al f)'(0)g'(0) \big\rbrack +MN \big\lbrack\big(\widehat
 {\mathcal{L}}_{4\al+10,x}^{\al}f,g\big)_{w(\al)}+\big(\widetilde{\mathcal{L}}_{2\al+8,x}^\al f \big)(0)g(0)+\big(\breve{\mathcal{L}}_{2\al+4,x}^\al f\big)'(0)g'(0) \big\rbrack\\
 &=-I_1^\al(f,g)-M \big\lbrack I_2^\al(f,g)+(\al +1)f'(0)g(0)-(\al +1)f'(0)g(0)  \big\rbrack\\
 &\quad -N \Big\lbrack \sum_{j=1}^{3}I_{3,j}^\al(f,g)+(\al +2)f''(0)g'(0)-\big\lbrace(\al +2)f''(0)-f'(0)\big\rbrace g'(0)\Big\rbrack\\
 &\quad -MN \Big\lbrack \sum_{j=0}^{\al +2}\frac{(\al+3-j)_{2j}}{j!(j+1)!}
 I_{4,j}^\al(f,g)+(S_1f)(0)g(0)+(S_2f)(0)g'(0)-\\
 &\hspace{2cm}-(S_1f)(0)g(0)+\big\lbrace f'(0)-(S_2f)(0)\big\rbrace g'(0)\Big\rbrack .
   \end{aligned} 
   \end{equation*}
 Hence we achieve the identity
   \begin{equation}
  \begin{aligned} 
  \big(\mathcal{L}_x^{\al,M,N}f,g\big)_{w(\al ,M,N)}=&-I_1^\al(f,g)-M I_2^\al(f,g) -N\sum_{j=1}^{3}I_{3,j}^\al(f,g)\\
  &-MN \sum_{j=0}^{\al +2}\frac{(\al+3-j)_{2j}}{j!(j+1)!} I_{4,j}^\al(f,g) -N(1+M)f'(0)g'(0).
    \label{eq3.19}
    \end{aligned} 
    \end{equation}
The right-hand side of (\ref{eq3.19}) is symmetric w. r. t. the functions $f,g$ , and so we can interchange their roles in the scalar product on the left. This completes the proof of part (i) in Theorem \ref{thm3.1}.

\textbf{(ii)} \;The orthogonality relation (\ref{eq3.2}) is a simple consequence of part (i), since for all $n \ne m$, $n,m \in \mathbb{N}_0$, the difference of the eigenvalues $\la_m^{\al,M,N}-\la_n^{\al,M,N}$ does not vanish, while
   \begin{equation*}
  \begin{aligned} 
 \hspace{1cm}\big(\la_m^{\al,M,N}&-\la_n^{\al,M,N} \big)\big(y_n,y_m\big)_{w(\al,M,N)}\\
 &=\big(\mathcal{L}_x^{\al,M,N}y_n,y_m\big)_{w(\al,M,N)}-  \big(y_n,\mathcal{L}_x^{\al,M,N}y_m\big)_{w(\al,M,N)}=0.\hspace{3cm} \square
    \end{aligned} 
    \end{equation*}
 \begin{corollary}
 \label{cor3.4}
 For any real-valued function $f \in C^{(4\al +10)}[0,\infty)$, there holds 
 \begin{equation}
   \begin{aligned} 
 \big(-\mathcal{L}_x^{\al,M,N}f&,f\big)_{w(\al ,M,N)}\ge
  \frac{1}{\al!} \int_{0}^{\infty}e^{-x}x^{\al +1}[f'(x)]^2dx+\\
  &+\frac{M}{\al!(\al +2)!} \int_{0}^{\infty}e^{-x}\big\lbrace D_x^{\al +2} \big\lbrack x^{\al +1}f(x)\big\rbrack \big\rbrace^2 dx +N(1+M)[f'(0)]^2,
     \label{eq3.20}
     \end{aligned} 
     \end{equation}
 and equality is attained for any linear function $f$.
 \end{corollary}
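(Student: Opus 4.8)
The plan is to deduce the estimate directly from identity (\ref{eq3.19}), which was obtained in the course of proving Theorem \ref{thm3.1}(i). Putting $g=f$ there and negating gives
\begin{equation*}
\big(-\mathcal{L}_x^{\al,M,N}f,f\big)_{w(\al ,M,N)}=I_1^\al(f,f)+M I_2^\al(f,f)+N\sum_{j=1}^{3}I_{3,j}^\al(f,f)+MN\sum_{j=0}^{\al +2}\frac{(\al+3-j)_{2j}}{j!(j+1)!}I_{4,j}^\al(f,f)+N(1+M)[f'(0)]^2.
\end{equation*}
By the definitions of $I_1^\al$ and $I_2^\al$ in Proposition \ref{prop3.2}(i), the term $I_1^\al(f,f)$ is precisely the first integral on the right of (\ref{eq3.20}), $M I_2^\al(f,f)$ is the second, and $N(1+M)[f'(0)]^2$ is the boundary term; so it only remains to check that the two middle sums are nonnegative.

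For that, I would note that for $g=f$ each of $I_{3,1}^\al,I_{3,2}^\al,I_{3,3}^\al,I_{4,j}^\al$ has an integrand equal to $e^{-x}$ times a polynomial weight times the square of a derivative of $x^{\al+1}f$ or $x^\al f$. The weights are $x^2$, $x+2\al+4$, $x+2\al+4$ and $x^j(x+q_j^\al)$, each nonnegative on $[0,\infty)$; the one point worth a line is that $q_j^\al=(\al+2)^{-1}(\al+2-j)(\al+3+j)\ge0$ for $0\le j\le\al+2$. Hence $I_{3,j}^\al(f,f)\ge0$ for $j=1,2,3$ and $I_{4,j}^\al(f,f)\ge0$ for $0\le j\le\al+2$. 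The scalar coefficients are also positive, since $(\al+3-j)_{2j}=(\al+3-j)(\al+4-j)\cdots(\al+2+j)>0$ for $0\le j\le\al+2$ (its smallest factor being $\al+3-j\ge1$), so that $(\al+3-j)_{2j}/(j!(j+1)!)>0$. Therefore both middle sums are $\ge0$, and (\ref{eq3.20}) follows.

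For the equality assertion I would simply take $f$ linear, $f(x)=a+bx$. Then $x^{\al+1}f(x)$ has degree at most $\al+2$ and $x^\al f(x)$ degree at most $\al+1$, so $D_x^{\al+3}[x^{\al+1}f(x)]=D_x^{\al+4}[x^{\al+1}f(x)]=0$, $D_x^{\al+2}[x^\al f(x)]=0$, and $D_x^{j+\al+3}[x^{\al+1}f(x)]=0$ for every $j\ge0$. Consequently all of $I_{3,1}^\al(f,f),I_{3,2}^\al(f,f),I_{3,3}^\al(f,f)$ and $I_{4,j}^\al(f,f)$ vanish, i.e. every term by which (\ref{eq3.20}) could be strict is zero, so equality holds. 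There is no genuine obstacle here once (\ref{eq3.19}) is available: the argument is bookkeeping, and the only care needed is in the two elementary sign checks ($q_j^\al\ge0$ and $(\al+3-j)_{2j}>0$ on the relevant range) and in the degree count that kills the higher derivatives of $x^{\al+1}f$ and $x^\al f$ when $f$ is linear.
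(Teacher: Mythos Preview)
Your proof is correct and follows exactly the paper's approach: the paper also derives (\ref{eq3.20}) from identity (\ref{eq3.19}) by observing that $I_{3,j}^\al(f,f)\ge 0$ and $I_{4,j}^\al(f,f)\ge 0$, and that these integrals vanish for linear $f$. You have simply made explicit the nonnegativity checks on the weights $x+2\al+4$, $x^j(x+q_j^\al)$ and on the coefficients $(\al+3-j)_{2j}/\bigl(j!(j+1)!\bigr)$, as well as the degree count for the equality case, which the paper leaves to the reader.
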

 \begin{proof}
Inequality (\ref{eq3.20}) follows from identity (\ref{eq3.19}) since $I_{3,j}^\al(f,f) \ge 0$, $1 \le j \le 3$, and $I_{4,j}^\al(f,f) \ge 0$, $0 \le j \le \al+2$. All these integrals clealy vanish, if $f$ is a linear function.
 \end{proof}

\vskip0.5cm
\begin{footnotesize}
\noindent
C. Markett, Lehrstuhl A f\"ur Mathematik, RWTH Aachen, 52056 Aachen, Germany;
\sPP
E-mail: {\tt markett@matha.rwth-aachen.de}

\end{footnotesize}

\end{document}